\newcommand\blfootnote[1]{%
  \begingroup
  \renewcommand\thefootnote{}\footnote{#1}%
  \addtocounter{footnote}{-1}%
  \endgroup
}
\newtheorem{theorem}{Theorem}[section]
\newtheorem{lemma}[theorem]{Lemma}
\newtheorem{corollary}[theorem]{Corollary}
\newtheorem{conjecture}[theorem]{Conjecture}
\theoremstyle{definition}
\newtheorem{definition}[theorem]{Definition}
\newtheorem{example}[theorem]{Example}
\theoremstyle{remark}
\newtheorem{remark}[theorem]{Remark}
\theoremstyle{remark}
{
\newtheorem*{notation}{Notation}
}
\numberwithin{equation}{section}
\newcommand{\CC}{{\mathbb C}}
\newcommand{\QQ}{{\mathbb Q}}
\newcommand{\ZZ}{{\mathbb Z}}
\newcommand{\NN}{{\mathbb N}}
\newcommand{\PP}{{\mathbb P}}
\newcommand{\bigslant}[2]{{\raisebox{.2em}{$#1$}\left/\raisebox{-.2em}{$#2$}\right.}}
\DeclareRobustCommand{\rchi}{{\mathpalette\irchi\relax}}
\newcommand{\irchi}[2]{\raisebox{\depth}{$#1\chi$}} 
\begin{document}

\title{A Cluster Structure on the Coordinate Ring of Partial Flag Varieties}

\author{Fayadh Kadhem}
\address{Mathematics Department\\
Louisiana State University\\
Baton Rouge, Louisiana}
\email{fkadhe1@lsu.edu}

\subjclass{Primary 13F60; Secondary 14M15, 13N15.}
\date{\today}
\maketitle

\begin{abstract}
The main goal of this paper is to show that the (multi-homogeneous) coordinate ring of a partial flag variety $\CC[G / P_K^{-}]$ contains a cluster algebra if $G$ is any semisimple complex algebraic group. We use derivation properties and a special lifting map to prove that the cluster algebra structure $\mathcal{A}$ of the coordinate ring $\CC[N_K]$ of a Schubert cell constructed by Goodearl and Yakimov can be lifted, in an explicit way, to a cluster structure $\widehat{\mathcal{A}}$ living in the coordinate ring of the corresponding partial flag variety. Then we use a minimality condition to prove that the cluster algebra $\widehat{\mathcal{A}}$ is equal to $\CC[G / P_K^{-}]$ after localizing some special minors.
\end{abstract}

\maketitle

\section{Introduction}
\label{introduction}
\blfootnote{The research of the author has been supported by NSF grant DMS-2131243.}
Cluster algebras were introduced in 2002 by Fomin and Zelevinsky and they have rapidly become one of the active areas in mathematics. This is due to their deep relations to other areas of of mathematics like representation theory, combinatorics, homological algebra, algebraic geometry, Poisson geometry, Teichm{\"{u}}ller theory and mathematical physics. On the other hand, the study of partial flag varieties is significant in representation theory and algebraic geometry. The first connection between these two studies appeared in Scott's work on Grassmannians and cluster algebras ~\cite{S} in 2003. In 2008, Gei{\ss}, Leclerc and Schr{\"{o}}er ~\cite{GLS} showed that, in some simply-laced cases, namely $A_n$ and $D_4$, the localization of the (multi-homogeneous) coordinate ring of a partial flag variety by non-minuscule minors matches the localization of some cluster structure by the same minors. They conjectured that this is true in the general, that is, when the type of $G$ is arbitrary. This paper proves this conjecture with respect another localization. The main ideas of the proof of ~\cite{GLS} motivate our work here.
Indeed, Gei{\ss}, Leclerc and Schr{\"{o}}er proved that the coordinate ring of a partial flag variety contains a cluster structure by showing the following:
\begin{enumerate}
    \item The coordinate ring of a Schubert cell has a cluster algebra structure $\mathcal{A}$.
    \item The cluster algebra $\mathcal{A}$ of the previous step can be lifted to some special cluster algebra $\widehat{\mathcal{A}}$ that lives in the coordinate ring of the partial flag variety corresponding to the coordinate ring of the cell of the previous step.
    \item The cluster algebra $\widehat{\mathcal{A}}$ coincides with the coordinate ring of the partial flag variety after localization with respect to some special minors.
\end{enumerate}
Although the first step was only conjectured in \cite{GLS}, it was fully proved in \cite{GLS3}. Moreover, despite the fact that we prove the second step independently, it was also generalized to the non-simply-laced ones by Demonet in \cite{D}.\\

Unfortunately, some essential tools of the proof of Gei{\ss}, Leclerc and Schr{\"{o}}er were based on the fact that they work on the simply-laced case. In fact, they used some categorification in their work, which works in the simply-laced case only, to show the first and the second steps, while they treated the third step for types $A_n$ and $D_4$ case by case. Because of that, the generalization we seek has to use some other results.\\

Goodearl and Yakimov ~\cite{GY, GY3} proved that the coordinate ring of any Schubert cell admits a cluster structure. Moreover, their construction matches the one of ~\cite{GLS} in the simply-laced case, yet it gives an explicit cluster structure to the coordinate ring of a cell in the non-simply-laced as well. It is worth mentioning here that in spite of the fact that the theory of cluster algebras is a mix between combinatorics and algebra, the work of Goodearl and Yakimov was almost purely algebraic.\\

The work of ~\cite{GY} and ~\cite{GY3} enables us to go back to the strategy of ~\cite{GLS}, that is, the three steps mentioned above, and follow them to prove that the coordinate ring of a partial flag variety contains a cluster algebra, no matter if we are in the simply-laced or the non-simply-laced case. Of course, we have to find different ways to treat steps 2 and 3, but thanks to Goodearl and Yakimov, the first step is already there.\\

\noindent In particular, to get steps 2, we proved the following theorem:
\begin{theorem}
Let $\left\{ (\textnormal{\textbf{x}},{B}) \right\}$ be the collection of seeds of the cluster algebra $\mathcal{A}$ of $\CC[N_K]$. The corresponding collection of pairs $\left\{ (\widehat{\textnormal{\textbf{x}}},\widehat{B}) \right\}$ constructed in Definition \ref{main definition} forms a collection of seeds related by mutation. In other words, if $(\textnormal{\textbf{x}},{B})$ and $(\textnormal{\textbf{x}}',{B'})$ are two seeds of the coordinate ring of the cell $\CC[N_K]$ such that $(\textnormal{\textbf{x}}',{B'}) = \mu_k(\textnormal{\textbf{x}},{B})$, then correspondingly $(\widehat{\textnormal{\textbf{x}}'},\widehat{B'})= \mu_k (\widehat{\textnormal{\textbf{x}}},\widehat{B})$. In particular, if $ (\textnormal{\textbf{x}}_0,{B}_0)$ is an initial seed of $\mathcal{A}=\CC[N_K]$ then $(\widehat{\textnormal{\textbf{x}}_0},\widehat{B_0})$ is an initial seed of a cluster algebra $\widehat{\mathcal{A}} \subset \CC[G/P_K^-]$.
\end{theorem}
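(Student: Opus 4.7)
The plan is to reduce the theorem to a single mutation step. Since any two seeds of $\mathcal{A}$ are connected by a finite chain of mutations, it suffices to fix a seed $(\mathbf{x},B)$ of $\CC[N_K]$, set $(\mathbf{x}',B') = \mu_k(\mathbf{x},B)$, and verify that $(\widehat{\mathbf{x}'},\widehat{B'}) = \mu_k(\widehat{\mathbf{x}},\widehat{B})$. This splits into two independent checks: (i) matrix compatibility $\mu_k(\widehat{B}) = \widehat{B'}$, and (ii) cluster-variable compatibility, namely that the new entry produced in position $k$ by applying $\mu_k$ to the lifted cluster equals $\widehat{x_k'}$. Once both hold, an easy induction on the length of a mutation path gives the last sentence of the theorem.

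For (i), the construction in Definition \ref{main definition} produces $\widehat{B}$ from $B$ by appending frozen rows recording the weight data attached to the fundamental weights of the Levi of $P_K^-$. Matrix mutation at a mutable index $k$ only modifies an entry $\widehat{b}_{ij}$ in terms of the signs appearing in column $k$ of the principal part, which is shared by $B$ and $\widehat{B}$. Consequently the appended rows in $\mu_k(\widehat{B})$ are obtained from the old frozen rows and the unmutated principal part by the standard formula. Comparing this with the prescription Definition \ref{main definition} gives for $\widehat{B'}$ reduces the assertion to a linear identity between pairings of roots with fundamental weights, which holds because both sides encode the same change in $T$-weight effected by the mutation.

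For (ii), I would start from the exchange relation
\[
x_k x_k' = \prod_{b_{ik}>0} x_i^{b_{ik}} + \prod_{b_{ik}<0} x_i^{-b_{ik}}
\]
inside $\CC[N_K]$ and apply the lifting map. The derivation property underlying Definition \ref{main definition} makes the lift multiplicative on cluster monomials up to an explicit frozen correction of the form $\prod_i \Delta_{\varpi_i}^{a_i}$, where the exponents $a_i$ depend only on the $T$-weight of the monomial being lifted. Because the two monomials on the right-hand side have equal $T$-weight, namely that of $x_k x_k'$, they acquire the same correction factor, which is also the factor forced on the left by the definition of the new frozen entries of $\widehat{B}$. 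Cancelling this common factor yields $\widehat{x_k'}$ on the right, matching the new cluster variable produced by $\mu_k$ in the lifted seed.

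The main technical obstacle is the weight-matching step in (ii): it is precisely the choice of new frozen entries in Definition \ref{main definition} that makes the two sides of the lifted exchange relation agree with no extra monomials, and showing this cleanly requires tracking the action of the lift on weight-homogeneous pieces. The derivation property is exactly the tool that turns this tracking into a short computation rather than a case analysis. Once (i) and (ii) are in place, starting from $(\widehat{\mathbf{x}_0},\widehat{B_0})$ and applying any sequence of mutations produces the lifts of the corresponding seeds of $\mathcal{A}$, so $(\widehat{\mathbf{x}_0},\widehat{B_0})$ is an initial seed of a genuine cluster algebra $\widehat{\mathcal{A}} \subset \CC[G/P_K^-]$.
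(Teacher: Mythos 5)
Your reduction to a single mutation step, and the observation that the principal $n\times n$ part of $\widehat{B}$ mutates exactly as $B$ does, are both fine and agree with the paper. The genuine gap is in your step (ii), and it propagates into (i). You assert that the two exchange monomials $M_k=\prod_{b_{ik}>0}x_i^{b_{ik}}$ and $L_k=\prod_{b_{ik}<0}x_i^{-b_{ik}}$ acquire the \emph{same} correction factor under the lift because they have the same $T$-weight. That is false in general. The correction exponents are governed not by the torus weight but by the quantities $a_j(f)=\max\{s\mid (e^\dagger_j)^s f\neq 0\}$, i.e.\ by the minimal multi-degree $\lambda(f)=\sum_{j\in J} a_j(f)\varpi_j$ of the lift, and $\lambda(M_k)$ and $\lambda(L_k)$ are generally different even though $M_k$ and $L_k$ are weight-homogeneous of the same torus weight. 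Indeed Lemma \ref{lifting} gives $\widetilde{M_k+L_k}=\mu_k\widetilde{M_k}+\nu_k\widetilde{L_k}$ with $\mu_k,\nu_k$ \emph{relatively prime} monomials in the $\Delta_{\varpi_j,\varpi_j}$; if the two correction factors were equal they would both be $1$, every appended row of $\widehat{B}$ would vanish, and the construction would be vacuous (compare the example at the end of the paper, where the appended row is $(-1,0,0)$). The whole point of Definition \ref{main definition} is to record the \emph{discrepancy} between the two correction factors as the new matrix entries.

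Because of this, the actual work of the theorem --- which your step (i) dismisses as ``a linear identity between pairings of roots with fundamental weights'' --- is to show that for every mutable $t\neq k$ the exponents $\alpha'_j,\beta'_j$ appearing in the lifted exchange relation of the \emph{mutated} seed are exactly those produced by applying the matrix mutation rule to the appended rows of $\widehat{B}$. The paper does this by computing $a_j(M'_t+L'_t)$ via the derivation property $a_j(fg)=a_j(f)+a_j(g)$ and the Leibniz rule, locating the candidate lifts inside the graded pieces $L(\lambda)$ of $\CC[G/P_K^-]$, and invoking the minimality of $\lambda(\cdot)$ to pin down the only admissible exponents (together with the easy sign-flip argument for the column $t=k$, using $M'_k=L_k$ and $L'_k=M_k$). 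None of that computation appears in your sketch, so as written the proposal does not establish the theorem.
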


\noindent For step (3), we actually proved that:
\begin{theorem}
The localization of the homogeneous coordinate ring of the flag variety $\CC[G/P_K^-]$ by $\Delta_{\varpi_j, \varpi_j}$, where $j \in J$, equals the localization of the cluster algebra $\widehat{\mathcal{A}}$ by the same elements. In symbols,
$$\CC[G/P_K^-][\Delta_{\varpi_j, \varpi_j}^{-1}]_{j\in J}=\widehat{\mathcal{A}}[\Delta_{\varpi_j, \varpi_j}^{-1}]_{j\in J}.$$
\end{theorem}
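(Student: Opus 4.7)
The plan is to prove the two inclusions separately. The inclusion
$\widehat{\mathcal{A}}[\Delta_{\varpi_j,\varpi_j}^{-1}]_{j\in J}\subseteq \CC[G/P_K^-][\Delta_{\varpi_j,\varpi_j}^{-1}]_{j\in J}$
is immediate from the construction of $\widehat{\mathcal{A}}$: by the first theorem above, every cluster variable of $\widehat{\mathcal{A}}$ arises as the image of the lifting map and therefore already lies in $\CC[G/P_K^-]$, so localizing both sides at the same minors preserves the containment. The content of the theorem is the reverse inclusion.

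For the reverse inclusion, I would exploit the classical identification of $N_K$ with the open subvariety of $G/P_K^-$ on which none of the generalized minors $\Delta_{\varpi_j,\varpi_j}$ vanishes. This realizes $\CC[N_K]$ as the multi-degree-zero part of the multi-graded ring $\CC[G/P_K^-][\Delta_{\varpi_j,\varpi_j}^{-1}]_{j\in J}$. Any element of $\CC[G/P_K^-]$ decomposes as a finite sum of multi-homogeneous pieces, so it suffices to treat a multi-homogeneous $h$ of some multi-degree $(n_j)_{j\in J}$. Then $h\prod_{j\in J}\Delta_{\varpi_j,\varpi_j}^{-n_j}$ has multi-degree zero and lies in $\CC[N_K]$. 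Invoking the Goodearl--Yakimov equality $\mathcal{A}=\CC[N_K]$, this element is a polynomial in finitely many cluster variables $x_1,\ldots,x_r$ drawn from finitely many seeds of $\mathcal{A}$. By the lifting construction, each $x_i$ admits a multi-homogeneous lift $\widehat{x}_i\in\widehat{\mathcal{A}}$ satisfying $x_i=\widehat{x}_i\prod_{j}\Delta_{\varpi_j,\varpi_j}^{-d_{i,j}}$ in the localization. Substituting into the polynomial expression and clearing denominators rewrites $h$ as an element of $\widehat{\mathcal{A}}[\Delta_{\varpi_j,\varpi_j}^{-1}]_{j\in J}$.

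The main obstacle is ensuring that the substitution step above is well-posed, which is exactly where the minimality condition on the lifting map enters. The point is that each cluster variable of $\mathcal{A}$ must possess a canonical multi-homogeneous lift into $\CC[G/P_K^-]$ with a uniquely determined multi-degree; the minimality property, which selects the lift of smallest compatible multi-degree, is what turns the assignment $x_i\mapsto\widehat{x}_i/\prod_j\Delta_{\varpi_j,\varpi_j}^{d_{i,j}}$ into a well-defined ring-theoretic inverse of dehomogenization, as opposed to a formal Laurent expression that might not even lie in $\widehat{\mathcal{A}}$. A secondary technical point is that a single element of $\mathcal{A}$ may involve cluster variables from many different seeds; this is absorbed by appealing to the previous theorem, which asserts that the lifting commutes with mutation, so that compatibility of the lifts across seeds is automatic and one never has to compare independently chosen lifts of the same element.
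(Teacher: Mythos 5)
Your argument is correct, and it reaches the conclusion by a genuinely more direct route than the paper. The paper first reduces to the case of a single $j\in J$, then runs a proof by contradiction on a homogeneous $f\in\CC[G/P_{I\setminus\{j\}}^-]$ of minimal degree not lying in $\widehat{\mathcal{A}_{\{j\}}}$: it uses the presentation $\CC[N_{I\setminus\{j\}}]=\CC[G/P^-_{I\setminus\{j\}}]/(\Delta_{\varpi_j,\varpi_j}-1)$ to write $\widetilde{g}-f=h(\Delta_{\varpi_j,\varpi_j}-1)$, where $g$ is the projection of $f$ and $h$ has strictly smaller degree, and only at the end expands $g$ as a linear combination of cluster monomials and lifts it term by term to place $\widetilde{g}$, and hence $f$, in the localization of $\widehat{\mathcal{A}_{\{j\}}}$. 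You bypass the reduction to $|J|=1$ and the minimal-counterexample induction entirely by working in the degree-zero component of the localized ring --- an identification the paper itself invokes in the second half of its proof --- and substituting the lifts of cluster variables into a polynomial expression for $h\prod_{j}\Delta_{\varpi_j,\varpi_j}^{-n_j}$. What this buys you is that you only ever need lifts of individual cluster variables, which belong to $\widehat{\mathcal{A}}$ by Definition \ref{main definition} and Theorem \ref{main theorem}, and never the minimal lift of an arbitrary element of $\CC[N_K]$; you thereby avoid the paper's delicate closing comparison of $\widetilde{g}$ with $\sum_i c_i\Delta_{\varpi_j,\varpi_j}^{a_j-a_{i,j}}f_i$ divided by a power of $\Delta_{\varpi_j,\varpi_j}$. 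One small correction to your commentary: minimality of the lift is not what makes the substitution well-posed, since in the localization any two homogeneous preimages of $x_i$ represent the same degree-zero element (each graded piece $L(\lambda)$ injects into $\CC[N_K]$); its role is only to single out the particular preimage $\widetilde{x_i}$ that the construction declares to be a cluster variable of $\widehat{\mathcal{A}}$.
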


\noindent Basically, we complete the second step of the strategy of Gei{\ss}, Leclerc and Schr{\"{o}}er in the first theorem and then do the third step in the second theorem. It is worth to mention here that the localization ~\cite{GLS} is over a the minors that are indexed by the set $J$ and are not minuscule, while we localize by the minors that are indexed by $J$ and omit the second condition.

Here is an outline of how the paper is organized: In the following section, we give the reader an overview of the structure of cluster algebras, while in Section 3 we go through the needed results from partial flag varieties. However, in Section 4, we focus on the highlights of the work of Goodearl and Yakimov. Indeed, we discuss the relation between Poisson geometry and cluster algebras and show how the cluster algebra $\mathcal{A}$ of the coordinate of a Schubert cell looks based on the structure of Goodearl and Yakimov. In fact, it is shown in Theorem \ref{Schubert cell} that the variables of their initial extended cluster are nothing but restrictions of some special homogeneous elements of the corresponding coordinate ring of a partial flag variety, called \textit{generalized minors}. Also, the exchange matrix of their work is given explicitly in the same theorem. Using the intuition from the work of ~\cite{GLS}, we then assigned, in Definition \ref{main definition}, a pair $(\widehat{\textnormal{\textbf{x}}},\widehat{B})$ to each seed $(\textnormal{\textbf{x}},B)$ of $\mathcal{A}$. In this new pair, $\widehat{\textnormal{\textbf{x}}}$ consists of the lifting of the same elements of $\textnormal{\textbf{x}}$ plus the generalized minors $\Delta_{\varpi_j,\varpi_j}$ for which the restriction is 1 in the coordinate ring of the cell. Also, the matrix $\widehat{B}$ is the matrix $B$ together with some additional rows given in some special form. After that, we show in Theorem \ref{main theorem} that these pairs are actually seeds of some cluster algebra $\widehat{\mathcal{A}}$ sitting inside the coordinate ring of the partial flag variety. Moreover, two pairs are related by a mutation if their corresponding original seeds of $\mathcal{A}$ are. This finishes step 2 of the strategy of ~\cite{GLS}. Subsequently, we use a minimality property in Theorem \ref{equality theorem} to show that the cluster algebra $\widehat{\mathcal{A}}$ is indeed equal to the coordinate ring of the partial flag variety.\\

In fact, it is an important problem to understand the relationship between the cluster structures of Demonet \cite{D} and ours. We plan to return to this in a future publication.
\section{Cluster Algebras}
This section gives an overview of the construction of cluster algebras and the main concepts. For more details about this, the reader is referred to  ~\cite{FWZ}, ~\cite{FZ}, or ~\cite{GSV}.
\begin{definition}
In our setting, the term \textit{ambient field} will be referring to a field $\mathcal{F}$ that is isomorphic to $\CC(x_1,...,x_n,...,x_m)$, where $\{x_1,...,x_n,...,x_m \}$ is an algebraically independent generating set.
\end{definition}

\begin{remark}
We usually write $\CC(x_1,...,x_n,...,x_m)$ instead of  writing $\CC(x_1,...,x_m)$ to emphasize that there is a distinction between the first $n$-variables and the rest $(m-n)$-ones. This distinction will become clear in the following sequence of definitions and remarks.
\end{remark}

\begin{definition}
A (\textit{labeled}) \textit{seed} is a pair $( \widetilde{{\textbf{x}}},\widetilde{B})$ where $\widetilde{{\textbf{x}}}$ is a tuple of algebraically independent variables $\widetilde{{\textbf{x}}}=(x_1,...,x_n,...,x_m)$ generating an ambient field $\mathcal{F}$ and $\widetilde{B}$ is an $m \times n$ matrix whose northwestern $n \times n$ submatrix $B$ is \textit{skew-symmetrizable}, that is, can be transformed to a skew-symmetric matrix by multiplying each row $r_i$ by some nonzero integer $d_i$. The tuple $\widetilde{{\textbf{x}}}$ is called an \textit{extended cluster}, where its first $n$-variables are called the \textit{cluster} (or \textit{mutable}) variables and the next $(m-n)$-variables are called the \textit{coefficient} (or \textit{frozen}) variables. The tuple $\textbf{x}=(x_1,...,x_n)$ is called a \textit{cluster}. In the same context, the northwestern $n \times n$ submatrix $B$ of $\widetilde{B}$ is called the \textit{exchange} matrix, while the matrix $\widetilde{B}$ is called the \textit{extended exchange} matrix.
\end{definition}

\begin{remark}
Sometimes the skew-symmetrizable matrix is replaced by a \textit{quiver} $Q$, which is a directed graph with $n$-\textit{mutable} and $(m-n)$-\textit{frozen} vertices such that it has no loops, no 2-oriented-cycles and no edges between two frozen vertices. In fact, each quiver gives rise to an $m \times n$ skew-symmetrizable matrix $\widetilde{B}(Q)$, where its entries are given by
$$b_{ij}=\begin {cases}
\# (i\rightarrow j), & \text{if}\ i > j,\\ 
0, & \text{if}\ i=j,\\ 
- \# (i \leftarrow j), & \text{if}\ i < j;
\end{cases}$$
where $\#(i \rightarrow j)$ is the number of arrows from $i$ to $j$ and $\#(i \leftarrow j)$ is the number of arrows from $j$ to $i$.
\end{remark}

\begin{definition}
Let $(\widetilde{{\textbf{x}}},\widetilde{B})$ be a seed. A \textit{mutation} $\mu_k$ at $k\in [1,n]$ is a transformation to a new seed $\mu_k(\widetilde{{\textbf{x}}},\widetilde{B})=(\widetilde{{\textbf{x}}}',\widetilde{B}')$, where the entries of the matrix $\widetilde{B}'$ are given by

\begin{equation}
    \label{eq1}
b'_{ij}=\begin {cases}
-b_{ij}, & \text{if}\ i=k \text{ or } j=k,\\ 
b_{ij}+\dfrac{|b_{ik}|b_{kj} + b_{ik}|b_{kj}|}{2}, & \text{otherwise};\\
\end{cases}
\end{equation}
and $\widetilde{{\textbf{x}}}'=(x'_1,...,x'_m)$, where $x'_i=x_i$ if $i\neq k$ and
$$x_k x'_k= \prod_{b_{ik}>0} x_i^{b_{ik}}+\prod_{b_{ik}<0} x_i^{-b_{ik}}.$$
Two seeds are said to be \textit{mutation equivalent} if one of them can be obtained from the other one by a sequence of mutations.
\end{definition}

\begin{remark}
It is not hard to verify that $\mu_k$ is an \textit{involution}, that is,
$$\mu_k(\mu_k(\widetilde{{\textbf{x}}},\widetilde{B}))=(\widetilde{{\textbf{x}}},\widetilde{B}).$$
\end{remark}

\begin{remark}
Let us start with an \textit{initial seed} $(\widetilde{{\textbf{x}}},\widetilde{B})$. It is known that any mutable variable can be obtained from $(\widetilde{{\textbf{x}}},\widetilde{B})$ by some sequence of mutations at some mutable indices. Therefore, knowing an initial seed gives a full picture of the mutable variables, and thus all of the extended clusters.
\end{remark}

\begin{definition}
Let $(\widetilde{{\textbf{x}}},\widetilde{B})$ be a seed. Let $\rchi$ be the set of all possible mutable variables, that is, the mutable ones of the initial seed or the mutable ones generated by any sequence of mutations applied on the initial seed. Let $\mathcal{R}$ be the polynomial ring $\mathcal{R}=\CC[x_{n+1},...,x_m]$, where $x_{n+1},...,x_m$ are the frozen variables of the seed $(\widetilde{{\textbf{x}}},\widetilde{B})$. The \textit{cluster algebra} (of \textit{geometric type}) is the algebra $\mathcal{A}=\mathcal{R}[\rchi]$, the polynomial algebra of all variables (mutable or frozen).
\end{definition}

\begin{remark}
Since an initial seed $(\widetilde{{\textbf{x}}},\widetilde{B})$ provides full information about its corresponding cluster algebra, we shall denote the latter by $\mathcal{A}(\widetilde{{\textbf{x}}},\widetilde{B})$.
\end{remark}

\begin{definition}
Let $(\widetilde{{\textbf{x}}},\widetilde{B})$ be a seed. The \textit{rank} of the seed or its corresponding cluster algebra is the number of its mutable variables, while the number of all variables of the seed is referred to as the \textit{cardinality} of the seed. Thus in our setting above, the rank of $(\widetilde{{\textbf{x}}},\widetilde{B})$ is $n$ and the cardinality of it is $m$.
\end{definition}

\begin{definition}
A cluster algebra $\mathcal{A}(\widetilde{{\textbf{x}}},\widetilde{B})$ is said to be \textit{of finite type} if it has a finite number of mutable variables. Otherwise it is said to be \textit{of infinite type.}
\end{definition}

\section{Partial Flag Varieties}
This section captures the required overview from the partial flag varieties. We need first review the definition of a partial flag variety and look at some facts about its coordinate ring. Other useful overviews, with probably more details about this, can be found in ~\cite{GLS}, ~\cite{GSV}, or ~\cite{J}.

\begin{remark}
It is known that each semisimple group induces a \textit{Cartan matrix} whose information can be encoded in the corresponding \textit{Dynkin diagram}. One of the significant consequences of this is that every semisimple Lie algebra is fully characterized, up to isomorphism, by its Dynkin diagram.
\end{remark}


\begin{remark}
From now on, the set $I$ denotes the vertex set of the Dynkin diagram $\Delta$ corresponding to $G$.
\end{remark}

\begin{definition}
A \textit{parabolic} subgroup $P$ of $G$ is a closed subgroup that lies between $G$ and some Borel subgroup $B$.
\end{definition}

\begin{example}
\begin{enumerate}
    \item Any Borel subgroup $B$ is parabolic.
    \item Fix a nonempty subset $J \subset I$ and let $K=I \setminus J$. Denote by $x_i(t)$ $(i\in I, t \in \CC)$ the simple root subgroups of the unipotent radical $N$ of $B$ and denote by $y_i(t)$ the simple root subgroups of the unipotent radical $N^-$ of $B^-$. The subgroup $P_K$ generated by $B$ and the one-parameter subgroups $y_k(t)$ $(k\in K, t\in \CC)$ is parabolic. Similarly, the subgroup $P_K^-$ generated by $B^-$ and the one-parameter subgroups $x_k(t)$ $(k \in K, t\in \CC)$ is a parabolic subgroup.
\end{enumerate}
\end{example}

\begin{definition}
A quotient $G/P$ is called a (\textit{partial}) \textit{flag variety} if $P$ is a parabolic subgroup of $G$.
\end{definition}

\begin{remark}
It is known that any parabolic subgroup is conjugate to a parabolic subgroup of the form $P_K$. This somehow, in many cases, reduces the study of partial flag varieties to the ones of the form $G/{P_K}$.
\end{remark}

\begin{remark}
The partial flag variety $G / P_K^-$ can be naturally embedded as a closed subset of the product of projective spaces
$$\prod_{j \in J} \PP(L(\varpi_j)^*),$$
where $\varpi_j$ is a fundamental weight of $G$, and for a dominant weight $\lambda$, the corresponding $L(\lambda)$ is the finite-dimensional irreducible $G$-module with highest weight $\lambda$; and $L(\lambda)^*$ denotes the right $G$-module obtained by twisting the action of $G$. As a terminology, the $L(\varpi_i)$'s are called the \textit{fundamental representations.}
\end{remark}

\begin{remark}
Let $\Pi_J \cong \NN^J$ denote the monoid of dominant integral weights of the form $\lambda = \sum_{j \in J} a_j \varpi_j$, where $a_j \in \NN$. The multi-homogeneous coordinate ring $\CC[G/P_K^-]
$ is a $\Pi_J$-graded algebra. In particular,
$$\CC[G/P_K^-]= \bigoplus_{\lambda \in \Pi_J} L(\lambda).$$
One of the significant results is that $\CC[G/P_K^-]$ can be identified with the subalgebra of $\CC[G/{N^-}]$ generated by the homogeneous elements of degree $\varpi_j$, where $j \in J$.
\end{remark}

\begin{remark}
For a Weyl group $W$ of $G$, the longest element in this paper will always be denoted by $w_0$ and the Coxetor generators will be denoted by $s_i$ where $i$ runs in $I$.\\
The notation of the length of some $w \in W$ will be $\ell (w).$ The Chevalley generators of the Lie algebra $\mathfrak{g}$ of $G$ are denoted $e_i,f_i,h_i$, where again $i$ runs in $I$. The $e_i$'s here generate $\textnormal{Lie}(N)= \mathfrak{n}.$ An important consequence of this is that $N$ acts naturally from the left and right on $\CC[N]$ by the following left and right actions respectively:
$$(x \cdot f)(n) = f(nx), \quad  (f \in \CC[N] \textnormal{ and } x,n \in N),$$
$$(f \cdot x)(n) = f(xn), \quad  (f \in \CC[N] \textnormal{ and } x,n \in N).$$
One might differentiate these two actions to get left and right actions of $\mathfrak{n}$ on $\CC[N]$, respectively.
\end{remark}

\begin{notation}
The right action of $e_i$ on $f \in \CC[N]$ will be denoted by $e ^ \dagger _i (f) := f \cdot e_i.$
\end{notation}

\begin{remark}
For each simple reflection $s_i \in W$, let $\overline{s_i}:=\exp(f_i) \exp(e_i) \exp (f_i)$. If $w=s_{i_1}...s_{i_r}$ with $r$ being the length of $w$, then define $\overline{w}=\overline{s_{i_1}}...\overline{s_{i_r}}$. Let $G_0=N^-HN$ be the open set of $G$ consisting of elements having Gaussian decomposition. Indeed, each $x\in G_0$ can be uniquely represented as
$$x=[x]_{-}[x]_0[x]_+,$$
where $[x]_{-}\in N^{-},$ $[x]_0\in H,$ $[x]_+ \in N$. Let $V_i^+$ be the irreducible representation whose highest weight is $\varpi_i$ and highest weight vector is $v_i^+$. For any $h\in H$ one has that $v_i^+$ is an eigenvector, that is, $hv_i^+=[h]^{\varpi_i} v_i^+$ and $[h]^{\varpi_i} \in \CC \setminus \{0\}$. This gives the following definition introduced by Fomin and Zelevinsky in \cite{FZ0}.
\end{remark}

\begin{definition}
For $u,v \in W$ and $i \in I$ define the \textit{generalized minor} to be the regular function on $G$ given by
$$\Delta_{u\varpi_i, v\varpi_i}(x)=[\overline{u}^{-1}x \overline{v}]_0^{\varpi_i}.$$
\end{definition}

\begin{remark}
  The distinguished elements $\Delta_{\varpi_j, w(\varpi_j)}$, $(w\in W)$, are of degree $\varpi_j$ (see 2.3 in \cite{BFZ} or section 2 and 6 in \cite{GLS} for more details). They make the coordinate ring of the cell and the coordinate ring of the corresponding flag variety related by the following:
  $$\CC[N_{K}]=\bigslant{{\CC[G/P_{K}^{-}]}}{(\Delta_{\varpi_j,\varpi_j}-1)}_{j \in J}.$$
  The generalized minors are nothing but a generalization of the flag minors of $SL_n$. Their significance in the cluster structure of the coordinate ring of partial flag varieties will be seen in section \ref{main section}.
 \end{remark}

\section{Preleminaries from Poisson Algebras}
In ~\cite{GY} and ~\cite{GY3}, Goodearl and Yakimov made the relationship between the coordinate ring of Schubert cells and cluster algebras clear and explicit. They proved that each such coordinate ring admits a cluster structure. Thus, since the coordinate ring of any cell is the quotient of the coordinate ring of some flag variety modded out by some generalized flag minors, it is so obvious that the result of Goodearl and Yakimov can play an important role in this paper. Their results were based on Poisson geometry and so we capture here the main elements that we need from their work. More details about the relation between Poisson geometry and cluster algebras can be found in ~\cite{GSV} and ~\cite{GY}.
\begin{definition}
\begin{enumerate}
    \item A \textit{Poisson bracket} $\{-,-\}$ is a Lie bracket that is a derivation also in each variable for the associative products.
    \item A \textit{Poisson algebra} is a commutative algebra $R$ together with a Poisson bracket.
    \item For $a\in R$ the \textit{Hamiltonian associated} with $a$ is the derivation $\{a,-\}$.
    \item A \textit{Poisson ideal} of $R$ is an ideal $I$ such that $\{R,I\} \subset I$.
\end{enumerate}
\end{definition}

\begin{remark}
The Poisson bracket of a Poisson algebra $R$ induces a Poisson bracket on any quotient of $R$ by a Poisson ideal.
\end{remark}

\begin{definition}
Define the \textit{Poisson-Ore extensions} to be $B[x;\sigma,\delta]_p$ where $B$ is a Poisson algebra, $B[x;\sigma,\delta]_p=B[x]$ is a polynomial ring and $\sigma , \delta$ are suitable Poisson derivations on $B$ such that for any $b \in B$ we have
$$\{x,b\}= \sigma(b)+\delta(x).$$
For an iterated Poisson-Ore extension $$R=\mathbb{K}[x_1]_p[x_2;\sigma_2,\delta_2]_p \cdots [x_m;\sigma_m,\delta_m]_p$$
and $k\in [0,m]$, define
$$R_k=\mathbb{K}[x_1,...,x_k]=\mathbb{K}[x_1]_p[x_2;\sigma_2,\delta_2]_p \cdots [x_k;\sigma_k,\delta_k]_p,$$
where $R_0= \mathbb{K}.$
\end{definition}

\begin{definition}
A \textit{Poisson-CGL extension} is an iterated Poisson-Ore extension $R$ as above that is endowed with a rational Poisson action of a torus $\mathcal{H}$ such that
\begin{enumerate}
    \item The elements $x_1,...,x_k$ are $\mathcal{H}$-eigenvectors;
    \item The map $\delta_k$ is locally nilpotent on $R_{k-1}$ for any $k\in [2,m]$;
    \item For any $k\in [1,m]$ there is an $h_k \in \textnormal{Lie}\mathcal{H}$ such that $\sigma_k=h_k |_{R_{k-1}}$ and the $h_k$-eigenvalue of $x_k$ nonzero and denoted by $\lambda_k.$
\end{enumerate}
\end{definition}

\begin{definition}
Let $R$ be a Noetherian Poisson domain. An element $p \in R$ is called a \textit{Poisson-prime element} if any of the following equivalent conditions hold:
\begin{enumerate}
    \item The ideal $(p)$ is a prime ideal and it is a Poisson ideal.
    \item The element $p$ is a prime element of $R$ such that $p | \{p,-\}.$
    \item $[$In the case $\mathbb{K}=\CC]$: The element $p$ is a prime element of $R$ and the zero locus $V(p)$ is a union of symplectic leaves of the maximal spectrum of $R$.
\end{enumerate}
\end{definition}

One of the great successes is due to the work of Goodearl, Yakimov when they proved the following:

\begin{theorem}
Every symmetric Poisson-CGL extension $R$ such that $\lambda_l / \lambda_j \in \QQ_{>0}$ for all $l,j$ has a canonical cluster algebra structure that coincides with its upper cluster algebra. 
\end{theorem}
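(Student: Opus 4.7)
The plan is to leverage the iterated Poisson–Ore structure of $R$ to construct a canonical initial seed $(\textnormal{\textbf{x}}, B)$, then use the symmetry hypothesis to produce mutations, and finally sandwich $R$ between the cluster and upper cluster algebras. First I would identify, for each $k \in [1,m]$, the homogeneous Poisson-prime elements $y_k \in R_k$: these exist because the locally nilpotent action of $\delta_k$ on $R_{k-1}$ together with the $\mathcal{H}$-eigenvector condition forces a unique (up to scalar) $\mathcal{H}$-homogeneous Poisson-prime in each intermediate subalgebra. Grouping indices by the ``predecessor function'' (two indices are equivalent when the associated Poisson-primes have proportional $\mathcal{H}$-weights), one obtains a partition of $[1,m]$ whose maximal elements become the frozen variables and whose other elements become mutable. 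The entries of $B$ are then read off as differences of $h_k$-weights, and the positivity hypothesis $\lambda_l/\lambda_j \in \QQ_{>0}$ ensures that the resulting matrix is integral and skew-symmetrizable.

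Second, to realize mutations, I would invoke the symmetric CGL hypothesis, which furnishes an alternate iterated Poisson–Ore presentation of $R$ with the variables reversed. Repeating the construction of Poisson-prime elements for this reversed chain yields a second family $\widetilde{y}_k$; the matching $\mathcal{H}$-weights then force an identity
\begin{equation*}
y_k \widetilde{y}_k \;=\; M_+ + M_-
\end{equation*}
in which $M_{\pm}$ are monomials in the remaining $y_j$, because any $\mathcal{H}$-homogeneous element of $R_k$ with the correct weight must be a sum of such monomials, and a Poisson-theoretic argument shows only two monomials can appear. This exchange relation is precisely the mutation rule (\ref{eq1}). Iterating between the forward and reversed presentations, and composing with the ``jump'' permutations that cyclically shift the CGL chain, generates the full mutation class.

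Third, to establish equality with the upper cluster algebra $\mathcal{U}$, I would prove the chain $\mathcal{A}(\textnormal{\textbf{x}}, B) \subseteq R \subseteq \mathcal{U} \subseteq R$. The first inclusion is clear because every mutated cluster variable constructed above lies in $R$ by design. The middle inclusion follows by showing that each generator $x_k$ of $R$ is expressible as a Laurent polynomial in every cluster: starting from the initial presentation and using the Poisson-prime change-of-variables $x_k \leftrightarrow y_k$ (which is unitriangular in a suitable ordering), one proves the Laurent property chart by chart. The final inclusion $\mathcal{U} \subseteq R$ uses that $R$ is a Noetherian domain containing the initial extended cluster and closed under the exchange relations. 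The standard containment $\mathcal{A} \subseteq \mathcal{U}$ closes the loop.

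The main obstacle is the middle step — verifying that the bracket between the ``forward'' and ``reversed'' Poisson-primes truly lands in a two-term binomial form rather than a longer $\mathcal{H}$-weighted sum. This is exactly where the hypothesis $\lambda_l/\lambda_j \in \QQ_{>0}$ is essential: it rules out the extraneous $\mathcal{H}$-weight coincidences that would allow further monomials to appear in $y_k \widetilde{y}_k$, and without it one still obtains Poisson-prime generators but loses the binomial exchange, and hence the cluster structure.
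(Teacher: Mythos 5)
The paper does not actually prove this statement: it is quoted as a known result of Goodearl and Yakimov \cite{GY}, so there is no internal proof to compare yours against. That said, your sketch reproduces the broad architecture of their argument faithfully — sequences of homogeneous Poisson-prime elements in the subalgebras $R_k$, reordered CGL presentations of a symmetric extension producing the exchange relations, and a sandwich between the cluster algebra and the upper cluster algebra — so as a summary of the strategy it is essentially on target.

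As a proof, however, it has genuine gaps. First, the step you yourself flag as the main obstacle — that $y_k\widetilde{y}_k$ equals a two-term binomial in the remaining Poisson-primes — is the heart of Goodearl--Yakimov's ``one-step mutation'' theorem and cannot be dispatched by $\mathcal{H}$-weight counting alone; it requires a delicate analysis of the leading terms of the prime elements with respect to the reversed presentation, and it occupies a substantial portion of their memoir. Moreover, the hypothesis $\lambda_l/\lambda_j \in \QQ_{>0}$ is not what rules out extra monomials: it is used to guarantee that the linear systems determining the columns of the extended exchange matrix (compatibility of $\widetilde{B}$ with the Poisson structure, i.e.\ log-canonicality of each cluster) admit integral solutions. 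Second, your chain $\mathcal{A} \subseteq R \subseteq \mathcal{U} \subseteq R$ yields $R = \mathcal{U}$ and $\mathcal{A} \subseteq R$, but it does not yield $\mathcal{A} = R$; the missing inclusion $R \subseteq \mathcal{A}$ — each generator $x_k$ is a genuine polynomial, not merely a Laurent polynomial, in cluster variables drawn from the various seeds — is the other hard half of their proof and is obtained by induction over the interval subalgebras $R_{[j,k]}$. Since the present paper uses the theorem as a black box, the citation is the right move here; if you wish to reprove it, those two points are where the real work lies.
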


\begin{remark}
The cluster variables in the constructions of Goodearl, Yakoimov are the unique homogeneous Poisson-prime elements of Poisson-CGL (sub)extensions not belonging to smaller subextensions. The mutation matrices of their seeds can be computed using linear systems of equations that come from the Poisson structure.
\end{remark}

A significant consequence of the work of Goodearl, Yakimov is:

\begin{theorem} \label{Schubert cell}
The coordinate ring $\CC[N_K]$ has a canonical cluster algebra structure.
\end{theorem}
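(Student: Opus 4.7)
The plan is to verify that $\CC[N_K]$ fits the hypotheses of the preceding Goodearl--Yakimov theorem, so the conclusion follows directly. Concretely, I need to produce on $\CC[N_K]$ (i) a Poisson algebra structure, (ii) the structure of an iterated Poisson--Ore extension, (iii) a rational Poisson action of a torus satisfying the three Poisson--CGL conditions, (iv) the symmetry property of the extension, and (v) the positivity condition $\lambda_l/\lambda_j \in \QQ_{>0}$.

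For (i), I would use the standard Sklyanin Poisson--Lie bracket on $G$, which restricts to the unipotent radical $N$; since the Schubert cell $N_K$ is a union of symplectic leaves, its defining ideal is Poisson and the quotient $\CC[N_K]$ inherits a Poisson algebra structure. For (ii), I would fix a reduced expression $s_{i_1}\cdots s_{i_m}$ of the Weyl group element associated with the cell and use the root-subgroup parametrization $(t_1,\dots,t_m)\mapsto x_{i_1}(t_1)\cdots x_{i_m}(t_m)$ to identify $\CC[N_K]$ with $\CC[t_1,\dots,t_m]$. Building this ring up one variable at a time, I would show that each step $R_{k-1}\hookrightarrow R_{k-1}[t_k]$ is Poisson--Ore by decomposing $\{t_k,-\}|_{R_{k-1}}$ into a linear part $\sigma_k$ and a locally nilpotent part $\delta_k$, the local nilpotence coming from a grading argument on total degree in $t_1,\dots,t_{k-1}$. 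For (iii), the torus is the maximal torus $H$ of $G$ acting on $N_K$ by conjugation; each $t_k$ is an $H$-eigenvector with weight the positive root $s_{i_1}\cdots s_{i_{k-1}}(\alpha_{i_k})$, and the Cartan element $h_k\in \mathrm{Lie}(H)$ satisfying $\sigma_k = h_k|_{R_{k-1}}$ is produced from the $H$-semisimplicity of the Poisson action together with the identification of the diagonal part of $\{t_k,-\}$ with a coroot.

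For (iv), the symmetry of the extension comes from the fact that the reversed reduced word gives an equivalent iterated Poisson--Ore presentation of the same algebra in the opposite order, with the roles of $\sigma$ and (a twisted version of) $\delta$ exchanged. For (v), all eigenvalues $\lambda_k$ turn out to be positive rational multiples of a single scalar determined by the invariant form on the root system, so their ratios automatically lie in $\QQ_{>0}$. I expect the main obstacle to be step (ii): explicitly verifying that the Poisson brackets among root subgroup coordinates decompose exactly in Poisson--Ore form $\sigma_k + \delta_k$ requires a careful computation using the commutation relations of the $x_i(t)$ and the explicit formulas for the Sklyanin bracket, and the structure constants behave differently in the non-simply-laced case. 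The symmetry property (iv) is the secondary technical point, since it requires identifying two a priori different iterated extension structures as presenting the same Poisson algebra. Once these checks are complete, the conclusion is immediate from the Goodearl--Yakimov theorem above.
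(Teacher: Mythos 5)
Your strategy is sound but it is genuinely different from the proof in the paper, and it is worth spelling out the trade-off. You propose to verify directly that $\CC[N_K]$ satisfies the hypotheses of the Poisson--CGL theorem quoted just above the statement: equip it with the standard Poisson structure, exhibit the iterated Poisson--Ore presentation from a reduced word, check the torus-action axioms, symmetry, and the eigenvalue condition. That is essentially the program carried out by Goodearl and Yakimov themselves in \cite{GY}, so nothing in your outline would fail in principle --- but you are proposing to reprove the main application of a long memoir rather than cite it, and the hard parts you flag (the Poisson--Ore decomposition $\sigma_k+\delta_k$ in the non-simply-laced case, and the symmetry of the extension) are precisely the technical core of that work. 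The paper instead takes the quantum route: it quotes Theorem 7.3 of \cite{GY3}, which gives the quantum Schubert cell $A_q(\mathfrak{n}_+(w))$ an integral quantum cluster structure with explicit cluster variables $D_{\varpi_{i_j},w_{\leq j}\varpi_{i_j}}$ (restrictions of generalized minors) and explicit exchange matrix $\widetilde{B}^w$, and then specializes at $q=1$ using Corollary 3.7 of \cite{GLS2} together with (4.7) of \cite{Y}. The payoff of the paper's route is not just existence but the explicit initial seed, which is indispensable for everything that follows (Definition \ref{main definition}, Theorem \ref{main theorem}, and the worked example): your Poisson--CGL argument yields a canonical cluster structure whose variables are abstract homogeneous Poisson-prime elements, and you would still need the separate identification of those elements with the minors $D_{\varpi_{i_j},w_{\leq j}\varpi_{i_j}}$ and a computation of the mutation matrix before the theorem is usable downstream. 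One small correction to your step (iii): in the product parametrization $x_{i_1}(t_1)\cdots x_{i_m}(t_m)$, conjugation by $h\in H$ rescales $t_k$ by $\alpha_{i_k}(h)$, so the $H$-weight of $t_k$ is the simple root $\alpha_{i_k}$ rather than the root $s_{i_1}\cdots s_{i_{k-1}}(\alpha_{i_k})$; the latter appears as the weight of the coordinate dual to the corresponding root vector of $\mathfrak{n}(w)$, and one must be consistent about which coordinates are being used when checking the CGL axioms.
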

\begin{proof}
Throughout this proof, the notation $e_k$ means the $k$th vector of the standard basis of $\ZZ^m$, the notation $a[j,k]$ is given by
$$a[j,k]:= \|(w_{[j,k]}-1)\varpi_{i_k} \|^2/4 \in \dfrac{1}{2}\ZZ ,$$
and the notation $S(w)$ is the \textit{support of $w$} and is given by
$$S(w):=\{i \in I \mid s_i \leq w \}=\{i \in I \mid i=i_k \textnormal{ for some } k \in [1,m] \}.$$
Also, set
$$p(k):=\begin{cases}
\textnormal{max}\{j < k \textnormal{ }| \textnormal{ } i_j=i_k \},& \text{if such } j \text{ exists;}\\
- \infty, & \text{otherwise.}
\end{cases}$$
$$s(k):=\begin{cases}
\textnormal{min}\{j > k \textnormal{ }| \textnormal{ } i_j=i_k \},& \text{if such } j \text{ exists;}\\
\infty, & \text{otherwise.}
\end{cases}$$
From Theorem 7.3 in \cite{GY3} we know that the quantum Schubert cell, denoted by $A_q(\mathfrak{n}_{+}(w))_{\mathcal{A}^{1/2}}$, has the quantum cluster structure given by the equation
$$A_q(\mathfrak{n}_{+}(w))_{\mathcal{A}^{1/2}}=\bm{\mathsf{A}}(M^w,\widetilde{B}^w,\varnothing)_{\mathcal{A}^{1/2}}=\bm{\mathsf{U}}(M^w,\widetilde{B}^w,\varnothing)_{\mathscr{A}^{1/2}},$$
where the extended cluster variables are given by
$$M^w(e_j)=q^{a[1,j]}D_{\varpi_{i_j},w_{\leq j} \varpi_{i_j}},$$ 
for all $j \in [1,m]$, in which
$$D_{\varpi_{j},w(\varpi_{j})}
=\textnormal{proj}({\Delta_{\varpi_{j},w(\varpi_{j})}}),$$
in which the frozen variables are the ones indexed by $j \in [1,m]$ such that $s(j)=\infty.$ The map
$$\textnormal{proj}:\CC[G/P_{K}^{-}] \to \CC[N_K]$$
denotes the standard projection from $\CC[G/P_{K}^{-}]$ to $\CC[N_K]$.
The exchange matrix $\widetilde{B}^w$ is of size $m \times (m-|S(w)|)$ and its $j \times k$ entry is given by
$$(\widetilde{B}^w)_{jk} = \begin{cases}
1, & \text{if } j=p(k), \\
-1, & \text{if } j=s(k),\\
a_{i_j i_k}, & \text{if } j<k<s(j)<s(k),\\
-a_{i_j i_k}, & \text{if } k<j<s(k)<s(j),\\
0, & \text{otherwise;}
\end{cases}$$
where the entry $a_{i_j i_k}$ is the same $i_j \times i_k$ entry of the Cartan matrix of the same type.
Thus, by corollary 3.7 in \cite{GLS2}, it follows that
$$\mathbb{C} \otimes A_q(\mathfrak{n}_{+}(w))_{\mathscr{A}^{1/2}} \cong A(\widetilde{B}^w).$$
On the other hand, by (4.7) in \cite{Y}, we know that the left hand side is isomorphic to the quotient of $A_q(\mathfrak{n}_{+}(w))_{\mathcal{A}^{1/2}}$ by $(q-1)$. Consequently, we get the desired cluster structure in the classical case whose exchange matrix is $\widetilde{B}^w$ and cluster variables are
$D_{\varpi_{i_k},w_{\leq k} \varpi_{i_k}}$.
\end{proof}

\section{Cluster Algebra Structure on $\CC[G/P_K^-]$} \label{main section}
In the work of Gei{\ss}, Leclerc and Schr{\"{o}}er ~\cite{GLS}, they proved that $\CC[G/P_K^-]$ up to localization admits a cluster structure if $G$ is simply-laced of type $A_n$ or $D_4$. Their work motivates our construction here. The idea is to translate their work, which was in terms of categorification, to another language that works in the general case.

\begin{notation}
The cluster structure on $\CC[N_K]$ constructed by the work of Goodearl and Yakimov will be denoted by $\mathcal{A}_J$, where $J$ and $K$ are as defined before. We may write $\mathcal{A}$ instead of $\mathcal{A}_J$ if the context is clear.
\end{notation}

\begin{lemma}\label{raise}
For every $f \in \CC [N_K]$ there exists a unique homogeneous element $\widetilde{f} \in \CC [G/P_{K}^{-}]$ such that its projection to $\CC[N_K]$ is $f$ and whose multi-degree is minimal with respect to the usual partial ordering obtained by the usual ordering of weights, that is, $\mu \preceq \lambda$ iff $\lambda - \mu$ is an $\NN$-linear combination of weights $\varpi_j$ $(j \in J)$. 
\end{lemma}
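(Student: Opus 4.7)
My plan is to work in the localization $R := \CC[G/P_K^-][D_j^{-1}]_{j \in J}$, where $D_j := \Delta_{\varpi_j, \varpi_j}$. Because $\pi(D_j) = 1$ for the projection $\pi : \CC[G/P_K^-] \to \CC[N_K]$ and the $\Pi_J$-grading on $\CC[G/P_K^-]$ extends to a $\ZZ^J$-grading on $R$, there is a standard identification $\CC[N_K] \cong R_0$ of $\CC[N_K]$ with the degree-zero subring of $R$; equivalently, $\pi$ is injective on each multi-homogeneous component $R_\lambda$ of $\CC[G/P_K^-]$. To see that homogeneous lifts exist at all, I take any lift $F$ of $f$, decompose it into multi-homogeneous components $F = \sum_\mu F_\mu$, pick $\nu \in \Pi_J$ with $\nu \succeq \mu$ for every $\mu$ in the support, and note that $\sum_\mu D^{\nu-\mu} F_\mu$ is a homogeneous lift of $f$ of multi-degree $\nu$. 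Thus the set $\mathcal{L}_f \subseteq \Pi_J$ of multi-degrees of homogeneous lifts of $f$ is nonempty, and it is upward-closed in $(\Pi_J,\preceq)$ since multiplying any lift by $D^{\nu}$ gives a lift raised by $\nu$.

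The heart of the argument is to show that $\mathcal{L}_f$ is closed under componentwise minima. Let $\widetilde{f}_1, \widetilde{f}_2$ be homogeneous lifts of multi-degrees $\lambda_1, \lambda_2$. Since both $D^{\lambda_2} \widetilde{f}_1$ and $D^{\lambda_1} \widetilde{f}_2$ project to $f$ and are homogeneous of the same multi-degree $\lambda_1 + \lambda_2$, the injectivity of $\pi$ on each multi-homogeneous component yields
\[ D^{\lambda_2} \widetilde{f}_1 = D^{\lambda_1} \widetilde{f}_2 \qquad \text{in } \CC[G/P_K^-]. \]
Setting $\mu := \min(\lambda_1,\lambda_2)$ componentwise and $\alpha_i := \lambda_{3-i} - \mu$, the weights $\alpha_1,\alpha_2 \in \Pi_J$ have disjoint $J$-support, and after cancellation of $D^\mu$ this identity becomes $D^{\alpha_1} \widetilde{f}_1 = D^{\alpha_2} \widetilde{f}_2$. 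Since each $D_j$ is a prime element of the domain $\CC[G/P_K^-]$ and distinct $D_j$'s are non-associate, $D^{\alpha_1}$ and $D^{\alpha_2}$ are coprime; hence $D^{\alpha_1}$ divides $\widetilde{f}_2$, and the quotient $g := \widetilde{f}_2 / D^{\alpha_1}$ is a homogeneous lift of $f$ of multi-degree $\mu$, so $\mu \in \mathcal{L}_f$.

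A nonempty subset of $(\NN^J,\preceq)$ that is both upward-closed and closed under componentwise minima is a principal filter, and its unique minimum element supplies the desired $\widetilde{f}$. Uniqueness of the lift at this minimum follows from the same construction applied with $\lambda_1 = \lambda_2 = \lambda$: then $\alpha_1 = \alpha_2 = 0$, and the identity above collapses directly to $\widetilde{f}_1 = \widetilde{f}_2$. The main obstacle I anticipate is not the combinatorial argument above but the clean invocation of the two structural inputs on which it rests: the identification $\CC[N_K] \cong R_0$ (equivalently, the injectivity of $\pi$ on each multi-homogeneous component), and the primality of each $\Delta_{\varpi_j,\varpi_j}$ in $\CC[G/P_K^-]$, which reflects the fact that its zero locus is an irreducible Schubert divisor in $G/P_K^-$.
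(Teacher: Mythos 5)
Your argument is correct, but it is genuinely different from the one the paper relies on. The paper simply cites Lemma 2.4 of \cite{GLS}, whose proof (sketched in the remark following the lemma) produces the minimal degree \emph{explicitly}: it sets $a_j(f)=\max\{s \mid (e_j^\dagger)^s f\neq 0\}$, shows $\lambda(f)=\sum_{j\in J}a_j(f)\varpi_j$ is the minimal degree admitting a lift, and gets uniqueness from the injectivity of $\mathrm{proj}$ on each $L(\lambda)$. You instead prove abstractly that the set $\mathcal{L}_f$ of degrees of homogeneous lifts is a principal filter in $(\NN^J,\preceq)$: nonempty and upward-closed by multiplying by powers of the $D_j$, and closed under componentwise minima by a cancellation-and-coprimality argument. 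Both proofs use the same key structural input (injectivity of $\mathrm{proj}$ on each $L(\lambda)$), but yours trades the derivation calculus for a divisibility argument, and therefore needs the extra input that each $\Delta_{\varpi_j,\varpi_j}$ is a prime element. Be aware that irreducibility of the zero locus alone does not give primality of the element; what one actually invokes is that $\CC[G]$ (for $G$ simply connected) is a UFD in which the generalized minors are irreducible and pairwise non-associate (Fomin--Zelevinsky), together with the observation that the quotient $\widetilde{f}_2/D^{\alpha_1}$, being homogeneous of degree $\mu\in\Pi_J$, lies back in $\CC[G/P_K^-]$. The trade-off is that your proof establishes existence and uniqueness without identifying $\lambda(f)$, whereas the explicit formula $\lambda(f)=\sum_j a_j(f)\varpi_j$ and the additivity $a_j(fg)=a_j(f)+a_j(g)$ are exactly what the paper uses later (in Lemma \ref{lifting} and in the proof of Theorem \ref{main theorem}), so the derivation-based description cannot be dispensed with downstream even though your route suffices for this lemma.
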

\begin{proof}
This is Lemma 2.4 in ~\cite{GLS}. Despite the fact that the main results of that paper are for the simply-laced case, this one is general and works for any type.
\end{proof}

\begin{remark}
The proof of the preceding lemma in ~\cite{GLS} involves the following important points:
\begin{enumerate}
    \item The notation $a_j(f)$ means the maximum of $\begin{Bmatrix} s \mid (e^ \dagger _j)^s f \neq 0\end{Bmatrix}$. In symbols
    $$a_j(f)= \max \begin{Bmatrix} s \mid (e^ \dagger _j)^s f \neq 0\end{Bmatrix}. $$
    \item The notation $\lambda (f)$ means
    $$ \lambda (f) = \sum_{j \in J} a_j(f) \varpi_j.$$
    \item The minimality in the previous lemma means that $\lambda (f)$ is minimal in the following sense: if $\tilde{\tilde{f}} \in L(\lambda)$ and $\textnormal{proj}(\tilde{\tilde{f}})=f$ then $\lambda (f) \preceq \lambda$. On the other hand, the projection of each piece $L(\lambda)$ to $\CC[N_K]$ is injective and so there if there is an element there whose projection is $f$, then it is unique in $L(\lambda)$. These two pieces of information together are the main ingredients in proving the existence and uniqueness of $\lambda(f).$
\end{enumerate}
\end{remark}

\begin{remark}
The endomorphisms $e ^ \dagger _j$ are derivations of $\CC[N_K]$. Thus, for all $f,g \in \CC[N_K]$ we have the following:
\begin{enumerate}
    \item The image of $fg$ under $e ^ \dagger _j$ is $$e ^ \dagger _j (fg) = e ^ \dagger _j (f) g + f e ^ \dagger _j (g);$$
    \item By Leibniz formula,
    $$(e ^ \dagger _j)^{a_j(f)+a_j(g)}(fg) = (e ^ \dagger _j)^{a_j (f)} (e ^ \dagger _j)^{a_j(g)}(g) \neq 0;$$
    \item For any integer $k \geq 1,$
    $$(e ^ \dagger _j)^{a_j(f)+a_j(g)+k}(fg) =0;$$
    \item Consequently,
    $$a_j (fg) = a_j(f) + a_j(g) .$$
\end{enumerate}
\end{remark}

\begin{lemma}\label{lifting}
For all $f,g \in \CC [N_K]$, we have $\widetilde{f \cdot g}=\widetilde{f} \cdot \widetilde{g}$. If for any $j \in J$, $a_j(f+g)=\max \{a_j(f),a_j(g) \}$, then there are some relatively prime monomials $\mu, \nu$ in the generalized minors $\Delta_{\varpi_j,\varpi_j}$ such that
$$\widetilde{f+g}=\mu \widetilde{f} + \nu \widetilde{g}.$$
\end{lemma}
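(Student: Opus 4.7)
The plan is to reduce both assertions to the uniqueness clause in Lemma \ref{raise}: a homogeneous element of $\CC[G/P_K^-]$ whose multi-degree is $\lambda(h)$ and whose projection to $\CC[N_K]$ is $h$ must be $\widetilde{h}$. Thus for each statement I only need to exhibit a candidate element with the correct multi-degree and the correct image under the projection, and uniqueness does the rest.

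For the multiplicativity $\widetilde{f\cdot g}=\widetilde{f}\cdot\widetilde{g}$, the key input is the derivation computation recalled just above the lemma, which gives $a_j(fg)=a_j(f)+a_j(g)$ and hence $\lambda(fg)=\lambda(f)+\lambda(g)$. Since the projection $\CC[G/P_K^-]\to\CC[N_K]$ is an algebra homomorphism, the product $\widetilde{f}\cdot\widetilde{g}$ is homogeneous of multi-degree $\lambda(f)+\lambda(g)=\lambda(fg)$ and projects to $fg$, so Lemma \ref{raise} identifies it with $\widetilde{fg}$.

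For the second statement, I would first observe that the hypothesis $a_j(f+g)=\max\{a_j(f),a_j(g)\}$ for all $j\in J$ translates, by the definition of $\lambda(\cdot)$, into $\lambda(f+g)=\lambda$, where $\lambda:=\sum_{j\in J}\max\{a_j(f),a_j(g)\}\,\varpi_j$. To promote $\widetilde{f}$ and $\widetilde{g}$ to the common multi-degree $\lambda$, I would introduce the obvious monomials in the $\Delta_{\varpi_j,\varpi_j}$,
$$\mu:=\prod_{j\in J}\Delta_{\varpi_j,\varpi_j}^{\max\{0,\,a_j(g)-a_j(f)\}}, \qquad \nu:=\prod_{j\in J}\Delta_{\varpi_j,\varpi_j}^{\max\{0,\,a_j(f)-a_j(g)\}}.$$
Because $\Delta_{\varpi_j,\varpi_j}$ is homogeneous of multi-degree $\varpi_j$ and projects to $1$ in $\CC[N_K]$ (exactly the quotient description recalled after the definition of the generalized minor), the element $\mu\widetilde{f}+\nu\widetilde{g}$ is homogeneous of multi-degree $\lambda=\lambda(f+g)$ and projects to $f+g$. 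A final application of Lemma \ref{raise} identifies this sum with $\widetilde{f+g}$, and coprimality of $\mu$ and $\nu$ as monomials in $\{\Delta_{\varpi_j,\varpi_j}\}_{j\in J}$ is immediate, since at each index $j$ at most one of the two exponents is nonzero.

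The main conceptual point, rather than any hard technical step, is recognizing that the hypothesis on $a_j(f+g)$ is exactly what rules out cancellation of top $e^\dagger_j$-weights and therefore forces $\lambda(f+g)$ to equal the coordinatewise maximum $\lambda$; once that identification is in hand, the shape of $\mu$ and $\nu$ is completely determined by the requirement that both summands be homogeneous of degree $\lambda$, and uniqueness in Lemma \ref{raise} closes the argument.
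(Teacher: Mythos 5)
Your argument is correct: both claims follow from the uniqueness/minimality characterization in Lemma \ref{raise} once one computes $\lambda(fg)=\lambda(f)+\lambda(g)$ and, under the no-cancellation hypothesis, $\lambda(f+g)=\sum_{j\in J}\max\{a_j(f),a_j(g)\}\varpi_j$, and then exhibits candidates of the right multi-degree projecting to $fg$ and $f+g$ respectively. The paper itself only cites Lemma 2.5 of \cite{GLS} here, and your proof is essentially a faithful reconstruction of that argument, including the correct choice of the coprime monomials $\mu$ and $\nu$.
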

\begin{proof}
Lemma 2.5 in ~\cite{GLS}.
\end{proof}
%

\begin{remark} \label{cluster mutation}
Let $(\widetilde{{{\textnormal{\textbf{x}}}}},\widetilde{B})$ be a seed of the cluster algebra $\mathcal{A}= \CC[N_K]$. Then the mutation formula tells us that
$$x_k x_k' = M_k + L_k,$$
where $M_k,L_k$ are monomials in the variables $x_1,...,x_{k-1},x_{k+1},...,x_n.$ As a consequence of the previous lemma (c.f. ~\cite{GLS}) we get that
$$\widetilde{x_k} \widetilde{x'_k}=\mu_k\widetilde{M_k}+\nu_k \widetilde{L_k},$$
where $\mu_k$ and $\nu_k$ are relatively prime monomials in $\Delta_{{\varpi_{j}},{\varpi_{j}}}$ $(j \in J)$. This means that we can write $\mu_k$ and $\nu_k$ as
$$\mu_k = \prod_{j \in J} \Delta_{{\varpi_{j}},{\varpi_{j}}}^{\alpha_j} \quad \textnormal{ and } \quad \nu_k = \prod_{j \in J} \Delta_{ {\varpi_{j}}, {\varpi_{j}}}^{\beta_j}.$$
Consequently, it is reasonable to expect that the variables $\widetilde{x_i}$ form the cluster variables of some cluster algebra contained in $\CC[G/P_K^-]$. This was proved in type $A_n$ and $D_4$ by Gei{\ss}, Leclerc and Schr{\"{o}}er.
\end{remark}
\begin{definition}
A \textit{lift} of a cluster algebra $\mathscr{A}$ is a cluster algebra $\widetilde{\mathscr{A}}$ such that $\mathscr{A}$ is a quotient algebra of it. Alternatively, we may say that $\mathscr{A}$ \textit{can be lifted} to $\widetilde{\mathscr{A}}$.
\end{definition}

\begin{definition} \label{main definition}
For any seed $(\textnormal{\textbf{x}},{B})$ of the cluster algebra $\mathcal{A}_J=\CC [N_K]$ constructed in \cite{GY} define a new pair $(\widehat{\textnormal{\textbf{x}}},\widehat{B})$ of $\CC[G / P_{K}^-]$ by raising each variable $x$ of $(\textnormal{\textbf{x}},{B})$ to the variable $\widetilde{x}$ (see Lemma \ref{lifting}) preserving the same type (mutable or frozen) and by adding the generalized minors $\Delta _{{\varpi_j}, {\varpi_j}}$ modded out in $\CC[N_K]$ as frozen variables. The matrix $\widehat{B}$ of this lift is obtained as follows:
Extend the matrix $B$ of the construction of Goodearl and Yakimov ~\cite{GY} by $|J|$ rows labeled by the elements of $J$ such that the entries are
\[
  \widehat{b}_{jk} =
  \begin{cases}
\beta_j , & \text{if $\beta_j \neq 0$;} \\
- \alpha_j,& \text{else,}
  \end{cases}
  \]
where $\alpha_j$ and $\beta_j$ are as in Remark \ref{cluster mutation}.
\end{definition}

\begin{theorem}\label{main theorem}
Let $\left\{ (\textnormal{\textbf{x}},{B}) \right\}$ be the collection of seeds of the cluster algebra $\mathcal{A}_J$ of $\CC[N_K]$. The corresponding collection $\left\{ (\widehat{\textnormal{\textbf{x}}},\widehat{B}) \right\}$ constructed above forms a valid collection of seeds. In other words, if $(\textnormal{\textbf{x}},{B})$ and $(\textnormal{\textbf{x}}',{B'})$ are two seeds of the coordinate ring of the cell $\CC[N_K]$ such that $(\textnormal{\textbf{x}}',{B'}) = \mu_k(\textnormal{\textbf{x}},{B})$, then correspondingly $(\widehat{\textnormal{\textbf{x}}'},\widehat{B'})= \mu_k (\widehat{\textnormal{\textbf{x}}},\widehat{B})$.
\end{theorem}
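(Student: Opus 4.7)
The plan is to verify, for each mutation index $k$, that applying the cluster mutation in direction $k$ to the lifted seed $(\widehat{\textbf{x}},\widehat{B})$ returns exactly the seed $(\widehat{\textbf{x}'},\widehat{B'})$ that Definition \ref{main definition} attaches to the mutated seed $(\textbf{x}',B') = \mu_k(\textbf{x},B)$. The verification splits into three independent pieces: the update of the $k$-th cluster variable, the update of the $k$-th column of $\widehat{B}$, and the update of all other columns.

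The cluster-variable update is a direct consequence of how the added $J$-rows were engineered. The exchange relation $x_k x'_k = M_k + L_k$ of $\mathcal{A}_J$ lifts via Lemma \ref{lifting} and Remark \ref{cluster mutation} to
$$
\widetilde{x_k}\,\widetilde{x'_k} \;=\; \mu_k\,\widetilde{M_k} + \nu_k\,\widetilde{L_k},
$$
with coprime frozen monomials $\mu_k = \prod_j \Delta_{\varpi_j,\varpi_j}^{\alpha_j}$ and $\nu_k = \prod_j \Delta_{\varpi_j,\varpi_j}^{\beta_j}$, so $\min(\alpha_j,\beta_j)=0$ for every $j \in J$. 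Since the first $m$ rows of $\widehat{B}$ coincide with those of $B$, their contribution to the two products in the cluster mutation formula is exactly $\widetilde{M_k}$ and $\widetilde{L_k}$; the prescription of Definition \ref{main definition} for the extra $J$-rows contributes precisely the complementary monomials $\nu_k$ and $\mu_k$. Hence the cluster mutation of $\widehat{B}$ at $k$ produces $\mu_k\widetilde{M_k} + \nu_k\widetilde{L_k}$, which forces $\widehat{x'}_k = \widetilde{x'_k}$.

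For the matrix mutation, the top $m$ rows are handled automatically because they agree with $B$. For the $|J|$ rows in column $k$, one needs the negation $\widehat{b'}_{jk} = -\widehat{b}_{jk}$. Writing the lifted exchange relation in the mutated seed, $\widetilde{x'_k}\widetilde{x_k} = \mu'_k\widetilde{M'_k} + \nu'_k\widetilde{L'_k}$, and using $M'_k = L_k$, $L'_k = M_k$ (which follow from the fact that the $k$-th column of $B$ is negated under mutation), uniqueness of the coprime factorization gives $\mu'_k = \nu_k$ and $\nu'_k = \mu_k$, i.e., $\alpha'_j = \beta_j$ and $\beta'_j = \alpha_j$. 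A short case analysis on which of $\alpha_j,\beta_j$ is nonzero then delivers $\widehat{b'}_{jk} = -\widehat{b}_{jk}$.

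The main obstacle is matching the remaining entries $\widehat{b}_{jl}$ for $j \in J$, $l \neq k$. The cleanest route is to first recast Definition \ref{main definition} in the equivalent closed form
$$
\widehat{b}_{jk} \;=\; -\sum_{i=1}^{m} b_{ik}\, d_{ij},
$$
where $d_{ij}$ is the $\varpi_j$-coefficient of the minimal lifting weight $\lambda(x_i)$ from Lemma \ref{raise}; this identity is obtained by extracting the $\Pi_J$-grading from both sides of the lifted exchange relation and invoking coprimality of $\mu_k,\nu_k$. With this reformulation I would compute $\widehat{b'}_{jl} = -\sum_i b'_{il}\,d'_{ij}$ using the Fomin--Zelevinsky formula for $b'_{il}$ and the identity $d_{kj} + d'_{kj} = \beta_j + \sum_{i:\,b_{ik}>0} b_{ik}\,d_{ij}$ read off from the lifted exchange relation at $k$, and compare with the cluster-mutation update $\widehat{b}_{jl} + (|\widehat{b}_{jk}|b_{kl} + \widehat{b}_{jk}|b_{kl}|)/2$. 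A four-way case split on the signs of $\widehat{b}_{jk}$ and $b_{kl}$ is then needed, and the technical core is that in every case the nonlinear update of $d_{kj}$ conspires with the quadratic matrix-mutation term to leave a linear combination of the $d_{ij}$ matching $\widehat{b'}_{jl}$.
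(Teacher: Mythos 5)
Your decomposition into (i) the exchange relation at $k$, (ii) the $k$-th column of the added rows, and (iii) the remaining columns matches the paper's, and your treatment of (i) and (ii) (coprimality of $\mu_k,\nu_k$, the swap $M'_k=L_k$, $L'_k=M_k$, hence $\alpha'_j=\beta_j$, $\beta'_j=\alpha_j$) is essentially the paper's argument. For (iii), however, you take a genuinely different and, in my view, cleaner route. The paper pins down the exponents $\alpha'_j,\beta'_j$ of the mutated relation by a representation-theoretic argument: it expands $\lambda(M'_t+L'_t)$ via the derivation property of $e^\dagger_j$, compares containments of products of the graded pieces $L(\lambda)$, and invokes minimality of the lift together with a homogeneity claim about the Goodearl--Yakimov variables (that each $x_i$ has $a_j(x_i)\neq 0$ for a unique $j$). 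You instead observe that coprimality plus the $\Pi_J$-grading force a closed linear formula for the added rows in terms of the degree data $d_{ij}=a_j(x_i)$ of the minimal lifts, and then verify the matrix-mutation rule by direct computation; the only nonlinear input is the identity $d_{kj}+d'_{kj}=\max\bigl(\sum_{b_{ik}>0}b_{ik}d_{ij},\,-\sum_{b_{ik}<0}b_{ik}d_{ij}\bigr)$ coming from $\lambda(x_k)+\lambda(x'_k)=\lambda(M_k+L_k)$. I have checked that this computation closes (the case split is really on the sign of $b_{kl}$, with the $\max$ absorbing the sign of $\widehat{b}_{jk}$), and it avoids the homogeneity claim entirely. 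What the paper's route buys is a description of where the lifted binomial sits inside the graded ring; what yours buys is a transparent, purely combinatorial verification.

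Two things need attention before this is a proof. First, the signs: with the labeling $M_k=\prod_{b_{ik}>0}x_i^{b_{ik}}$, $L_k=\prod_{b_{ik}<0}x_i^{-b_{ik}}$ and $\widetilde{x_k}\widetilde{x'_k}=\mu_k\widetilde{M_k}+\nu_k\widetilde{L_k}$, one has $\alpha_j=\max-a_j(M_k)$ and $\beta_j=\max-a_j(L_k)$, so your closed form $-\sum_i b_{ik}d_{ij}$ equals $\alpha_j-\beta_j$, which is the \emph{negative} of Definition \ref{main definition} as literally written ($\beta_j-\alpha_j$), and your auxiliary identity should read $d_{kj}+d'_{kj}=\alpha_j+\sum_{b_{ik}>0}b_{ik}d_{ij}$ rather than $\beta_j+\sum_{b_{ik}>0}b_{ik}d_{ij}$. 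Since a global sign flip of a row is \emph{not} compatible with matrix mutation, you must fix one consistent labeling of $(M_k,L_k,\mu_k,\nu_k)$ and check that both the exchange relation and the closed form come out with the same sign; only one choice works, and your current write-up mixes the two (you say the added rows contribute the "complementary" monomials yet conclude the product is $\mu_k\widetilde{M_k}+\nu_k\widetilde{L_k}$). Second, you should state explicitly that both the coprimality statement and the identity for $d_{kj}+d'_{kj}$ rest on the hypothesis $a_j(M_k+L_k)=\max\{a_j(M_k),a_j(L_k)\}$ of Lemma \ref{lifting} holding for every exchange binomial; the paper also assumes this (via Remark \ref{cluster mutation}, citing \cite{GLS}), so it is not a gap relative to the paper, but it is a load-bearing assumption and should be flagged where used.
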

\begin{proof}
Let $k$ be a mutable index in the construction of ~\cite{GY}. We need to show that $\mu_k(\widehat{B})=\widehat{B'}$. In other words, we need to show that the matrix entries of the mutation of $\widehat{B}$ match the ones coming from our construction (definition \ref{main definition}) using the mutation equations of the mutated seed $(\textnormal{\textbf{x}}',{B'})$. The equations are of the form
$$\widetilde{x'}_t \widetilde{x''}_t = \mu'_t \widetilde{M'}_t + \nu'_t \widetilde{L'}_t,$$
where $x'_t$ denotes the $t$th variable in the mutated extended cluster in a direction $k$ and $x''_t$ denotes the $t$th variable coming from a second mutation in a direction $t$. Let $\widehat{b'}_{st}$ denote the entry of position $s \times t$ in $\mu_k(\widehat{B})$. Obviously, if $s \notin J$ then $\widehat{b'}_{st}$ equals the $s \times t$ entry of $\mu_k(B)$, as the entries of $\widehat{B}$ and $B$ match when $s \notin J$. Consequently, the entries of the mutation of both of them coincide again when $s \notin J$. Assume now that $s \in J$ and $t=k$. Then by the fact that the construction of ~\cite{GY} is indeed a cluster algebra, we get that $M'_k=L_k$ and $L'_k=M_k$. This clearly makes $\alpha'_j=\beta_j$ and $\beta'_j=\alpha_j$. Since $\mu'_k$ and $\nu'_t$ are relatively prime, we see easily from the construction that the entry we get is $-\widehat{b}_{st}$ which equals $\widehat{b'}_{st}$ by the mutation formula.\\

Assume now that $t$ is a mutable index other than $k$. It suffices to show that in
$$\widetilde{x'}_t \widetilde{x''}_t = \mu'_t \widetilde{M'}_t + \nu'_t \widetilde{L'}_t,$$
the exponents of the minors of the monomials $\mu'_t$ and $\nu'_t$ match the formula of the matrix mutation. Equivalently, we may assume that $\mu'_t$ and $\nu'_t$ are as we want and then show that $\mu'_t \widetilde{M'}_t + \nu'_t \widetilde{L'}_t,$ is an element whose proj is $M'_t+L'_t$ and whose order is minimal with respect to $\preceq$. The first property is straightforward. Now,
\begin{equation} \label{lambda}
    \lambda \big( {M'_t} +{L'_t} \big)= \sum_{j \in J} a_j \big( {M'_t} +{L'_t} \big) \varpi_j= \sum_{j \in J} a_j \varpi_j
\end{equation}
where \\
$
\begin{aligned}
a_j &= a_j \big({M'_t} +{L'_t} \big)\\
&= \max \{s \mid (e^ \dagger _j )^s \big( {M'_t} +{L'_t} \big) \neq 0\}\\
& = \max \begin{Bmatrix} s \Biggm| (e^ \dagger _j )^s \begin{pmatrix}
\displaystyle \prod_{b_{it}+\frac{|b_{ik}| b_{kt} + b_{ik}|b_{kt}|}{2}>0} {{x'}}_i ^{b_{it}+\frac{|b_{ik}| b_{kt} + b_{ik}|b_{kt}|}{2}}\\
+ \\
\displaystyle \prod_{b_{it}+\frac{|b_{ik}| b_{kt} + b_{ik}|b_{kt}|}{2}<0} {{x'}}_i ^{- \Big(b_{it}+\frac{|b_{ik}| b_{kt} + b_{ik}|b_{kt}|}{2}\Big)} \\

\end{pmatrix} \neq 0 \end{Bmatrix}\\
&= \max \begin{Bmatrix}
\displaystyle \sum_{b_{it}+\frac{|b_{ik}| b_{kt} + b_{ik}|b_{kt}|}{2}>0} a_j \Big({{x'}}_i ^{b_{it}+\frac{|b_{ik}| b_{kt} + b_{ik}|b_{kt}|}{2}} \Big),\\
\\
\displaystyle \sum_{b_{it}+\frac{|b_{ik}| b_{kt} + b_{ik}|b_{kt}|}{2}<0} a_j \Bigg( {{x'}}_i ^{- \Big(b_{it}+\frac{|b_{ik}| b_{kt} + b_{ik}|b_{kt}|}{2}\Big) } \Bigg)
\end{Bmatrix}.\\
\end{aligned}$\\

\noindent Note here that the last equality is obtained by the fact that $a_j(fg)=a_j(f) + a_j (g).$
Assume now, for some $j$, that the first sum is the maximum. Then, using the same fact once again, we clearly get that
$$a_j =
  \sum_i \Big( {b_{it}+\frac{|b_{ik}| b_{kt} + b_{ik}|b_{kt}|}{2}} \Big) a_j \big({{x'}}_i \big).$$
Recall also that $x'_i=x_i$ for $i \notin \{k,t\}$. So,
$$
a_j =
\sum_i \Big( {b_{it}+\frac{|b_{ik}| b_{kt} + b_{ik}|b_{kt}|}{2}} \Big) a_j \big({{x}}_i \big).$$
But by equation (\ref{lambda}), it follows that\\
\begin{align*}
    \lambda \big({M'_t} +{L'_t} \big) &= \sum_{j \in J} a_j \varpi_j\\
    &= \sum_{j \in J} \sum_{i}
    \begin{pmatrix}
    \displaystyle \bigg| {b_{it}+\frac{|b_{ik}| b_{kt} + b_{jk}|b_{kt}|}{2}} \bigg| a_j \big({{x}}_i \big)
    \end{pmatrix}
    \varpi_j.
\end{align*}
Now, since $\CC[G/P_{K}^{-}]$ is graded by the set of sums of the form $\sum_{i \in \NN} a_i \varpi_i$, the last equation implies that
\begin{align*}
L \left( \lambda \big({M'_t} +{L'_t} \big) \right)&=L \left( \sum_{j \in J} \sum_{i}
    \begin{pmatrix}
    \displaystyle \bigg| {b_{it}+\frac{|b_{ik}| b_{kt} + b_{jk}|b_{kt}|}{2}} \bigg| a_j \big({{x}}_i \big)
    \end{pmatrix}
    \varpi_j \right)\\
    & \supset \prod_{j \in J} \prod_{i} L \left( 
   \bigg| {b_{it}+\frac{|b_{ik}| b_{kt} + b_{ik}|b_{kt}|}{2}} \bigg| a_j \big({{x}}_i \big)
    \varpi_j \right)\\
    &= \prod_{j \in J} \prod_{i} L \left( 
   d_i a_j ({{x}}_i )
    \varpi_j \right),
    \end{align*}
    where
    $$d_i := \bigg| {b_{it}+\dfrac{|b_{ik}| b_{kt} + b_{ik}|b_{kt}|}{2}} \bigg|.$$
    So, we get that
    \begin{equation} \label{subset}
    L \left( \lambda \big({M'_t} +{L'_t} \big) \right) \supset
    \prod_{j \in J} \prod_{i} \underbrace{L \left( 
   a_j ({{x}}_i )
    \varpi_j \right) \cdots L \left( 
   a_j ({{x}}_i )
    \varpi_j \right)}_\text{$d_i$ times}.
    \end{equation}
    Note here that the sum over $i$ is the sum over the positive $d_i$'s only or the negative $d_i$'s only. A similar work with $L \left( \lambda \big({M_k} +{L_k} \big) \right)$ shows that
    \begin{align*}
L \left( \lambda \big({M_k} +{L_k} \big) \right)&=L \left( \sum_{j \in J} \sum_{i}
    \begin{pmatrix}
    \displaystyle |b_{ik}| a_j \big({{x}}_i \big)
    \end{pmatrix}
    \varpi_j \right)\\
    & \supset \prod_{j \in J} \prod_{i} L \left( 
   |b_{ik}| a_j \big({{x}}_i \big)
    \varpi_j \right).
    \end{align*}
    This implies that
    $$L \left( \lambda \big({M_k} +{L_k} \big) \right) \supset
    \prod_{j \in J} \prod_{i} \underbrace{L \left( 
    \varpi_j \right) \cdots L \left( 
    \varpi_j \right)}_\text{$|b_{ik}|a_j ({{x}}_i )$ times}.$$
    But since $\Delta_{\varpi_j,\varpi_j}$ is of degree $\varpi_j$, it follows that the possible occurrences of the exponents of $\Delta_{\varpi_j,\varpi_j}$ are the integers
    $$0,1,2,...,\sum_i a_j(x_i)|b_{ik}|.$$
    However, the minimality of
    $$a_j(\lambda(M_k+L_k))=\sum_{j \in J} \sum_i |b_i| a_j(x_i)$$
    shows that the only possible solution is $\sum_i a_j(x_i)|b_{ik}|$, because the rest are still available in some $L(\lambda)$'s in which $\lambda$ is less than $\lambda(M_k+L_k)$.
    Consequently, the only possibilities for $\alpha_j$ and $\beta_j$ are $0$ or $\sum_i a_j(x_i)|b_{ik}|.$ But, thank to the homogeneity of the construction of \cite{GY}, there exists a unique $j$ in which $a_j (x_i) \neq 0$. Hence, one of $\alpha_{jk}$ and $\beta_{jk}$ is $a_j (x_i) |b_{ik}|$ and the other is 0.\\
    Therefore, for every $i$ there is a unique $j$ in which one of the following must be true:
    $$a_j({x}_i)b_{it}=\pm \alpha_{jt}, \quad a_j({x}_i)b_{ik}=\pm \alpha_{jk} \quad \textnormal{and} \quad a_j({x}_i)|b_{ik}|=\alpha_{jk},$$
    or
    $$a_j({x}_i)b_{it}=\pm \beta_{jt}, \quad a_j({x}_i)b_{ik}=\pm \beta_{jk} \quad \textnormal{and} \quad a_j({x}_i)|b_{ik}|=\beta_{jk}.$$
    Now,
    \begin{align*}
    \left( \Delta_{\varpi_j,\varpi_j}^{a_j ({x}_i)} \widetilde{x}_i\right) ^{d_i} \in L \left( 
   d_i a_j ({{x}}_i )
    \varpi_j \right)\\
    \left( \Delta_{\varpi_j,\varpi_j}^{a_j ({x}_i)} \widetilde{x}_i\right) ^{{b_{it}+\frac{|b_{ik}| b_{kt} + b_{ik}|b_{kt}|}{2}}} \in L \left( 
   d_i a_j ({{x}}_i )
    \varpi_j \right)
    \end{align*}
    It is not hard now to see that $\left( \Delta_{\varpi_j,\varpi_j}^{a_j ({x}_i)} \widetilde{x}_i\right) ^{d_i}$ forms one factor of the monomial $\mu'_t \widetilde{
    M'}_t$ or the monomial $\nu'_t \widetilde{
    L'}_t$. The rest are similarly there. Since $L \left( \lambda \big({M'_t} +{L'_t} \big) \right)$ and $L \left( \lambda \big({M_k} +{L_k} \big) \right)$ are homogeneous ideals and since
    $$\widetilde{x_k x'_k}=\widetilde{x}_k \widetilde{x'}_k = \mu_k \widetilde{M}_k + \nu_k \widetilde{L}_k \in L \left( \lambda \big({M_k} +{L_k} \big) \right),$$
    it is again not difficult to combine these information to see that $$\widetilde{x'_t x''_t}=\widetilde{x'}_t \widetilde{x''}_t = \mu'_t \widetilde{M'}_t + \nu'_t \widetilde{L'}_t \in L \left( \lambda \big({M'_t} +{L'_t} \big) \right).$$
    This completes the proof.
\end{proof}

\begin{notation}
The cluster algebra contained in $\CC[G/{P_K^{-}}]$ and obtained from the preceding theorem will be denoted by $\widehat{\mathcal{A}_J}$ or simply $\widehat{\mathcal{A}}$ if the context is clear. 
\end{notation}

\begin{corollary}
Let $B$ be the matrix $\widetilde{B}^w$ of Theorem \ref{Schubert cell}. The pair
$$\bigg( \big\{ \widetilde{ D}_{\varpi_{i_k},w_{\leq k} \varpi_{i_k}}  \big\} \sqcup \{ \Delta_{\varpi_j, \varpi_j} \mid j\in J \}, \widehat{B} \bigg)$$
is an initial seed of the cluster algebra $\widehat{\mathcal{A}} \subset \CC[G/P_K^-]$.
\end{corollary}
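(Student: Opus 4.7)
The plan is to obtain this as an immediate consequence of Theorem \ref{main theorem} applied to the distinguished initial seed of $\mathcal{A}_J = \CC[N_K]$ produced in Theorem \ref{Schubert cell}. First I would recall precisely the data of that initial seed: the extended cluster consists of the generalized minors $D_{\varpi_{i_k},\, w_{\leq k}\varpi_{i_k}}$ for $k \in [1,m]$ (with frozen indices those $k$ for which $s(k) = \infty$), and the extended exchange matrix is $\widetilde{B}^w$.

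Next, I would apply the lifting prescription of Definition \ref{main definition} to this seed. Each cluster or frozen variable $D_{\varpi_{i_k},\, w_{\leq k}\varpi_{i_k}}$ is replaced by its unique minimal homogeneous preimage $\widetilde{D}_{\varpi_{i_k},\, w_{\leq k}\varpi_{i_k}} \in \CC[G/P_K^-]$, as supplied by Lemma \ref{raise} (preserving the mutable/frozen label); the generalized minors $\Delta_{\varpi_j,\varpi_j}$ for $j \in J$ are adjoined as additional frozen variables; and $\widetilde{B}^w$ is augmented by $|J|$ new rows whose entries are read off from the exponents $\alpha_j, \beta_j$ of Remark \ref{cluster mutation}. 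The pair produced by this procedure is precisely the one displayed in the statement.

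Theorem \ref{main theorem} then guarantees that every such lifted pair is a seed of a cluster algebra $\widehat{\mathcal{A}} \subset \CC[G/P_K^-]$, and that mutation of seeds in $\widehat{\mathcal{A}}$ is compatible with mutation of the underlying seeds of $\mathcal{A}_J$. Because $(\{D_{\varpi_{i_k},\, w_{\leq k}\varpi_{i_k}}\},\widetilde{B}^w)$ is an initial seed of $\mathcal{A}_J$, any other seed of $\mathcal{A}_J$ is reached from it by a finite sequence of mutations, and the lifting of that mutation sequence reaches the corresponding lifted seed in $\widehat{\mathcal{A}}$. Hence the lifted pair generates $\widehat{\mathcal{A}}$ and qualifies as an initial seed.

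The only delicate point I foresee is confirming that the bottom $|J|$ rows of $\widehat{B}$ prescribed in Definition \ref{main definition} are well-defined when applied directly to this specific initial seed, i.e.\ that for each mutable index $k$ the exponents $\alpha_j, \beta_j$ in the lifted exchange relation are uniquely determined (and not, say, dependent on some choice of representative). This uniqueness is however already baked into the proof of Theorem \ref{main theorem}, where it is established via the minimality of $\lambda(M_k + L_k)$ together with the homogeneity of the Goodearl--Yakimov construction. With that in hand, the corollary is a matter of unwinding Definition \ref{main definition} on the explicit initial seed of Theorem \ref{Schubert cell}.
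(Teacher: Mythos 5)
Your proposal is correct and follows exactly the paper's own argument: apply the lifting construction of Definition \ref{main definition} to the initial seed $(D_{\varpi_{i_k},w_{\leq k}\varpi_{i_k}},\widetilde{B}^w)$ of $\CC[N_K]$ from Theorem \ref{Schubert cell} and invoke Theorem \ref{main theorem} for compatibility with mutation. The extra remarks on well-definedness of the added rows are sound but already subsumed in the proof of Theorem \ref{main theorem}, as you note.
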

\begin{proof}
Apply the construction of the previous theorem 
to the initial seed $(D_{\varpi_{i_k},w_{\leq k} \varpi_{i_k}}, \widetilde{B}^w)$ of $\CC[N_K]$ (see Theorem \ref{Schubert cell}).  The mutable and frozen variables are described in Definition \ref{main definition}.
\end{proof}

\begin{remark}
One might think naively that the lift $\widetilde{ D}_{\varpi_{i_k},w_{\leq k} \varpi_{i_k}}$ is equal to a generalized minor. But this is not the case in general; see Example 10.3 of \cite{GLS}.
\end{remark}

\begin{remark}
In the simply-laced case, it is obvious that the construction of $\widehat{\mathcal{A}_J}$ matches the one of ~\cite{GLS}.
\end{remark}

\begin{remark}
By construction, it is clear that the extended clusters of $\mathcal{A}$ and $\widehat{\mathcal{A}}$ are in one-to-one correspondence. So, $\mathcal{A}$ and $\widehat{\mathcal{A}}$ must be of the same type (either both finite or both infinite).
\end{remark}


\begin{theorem}\label{equality theorem}
The localization of the homogeneous coordinate ring of the flag variety $\CC[G/P_K^-]$ by $\Delta_{\varpi_j, \varpi_j}$, $(j\in J )$ equals the localization of the cluster algebra $\widehat{\mathcal{A}}$ by the same elements. Namely,
$$\CC[G/P_K^-][\Delta_{\varpi_j, \varpi_j}^{-1}]_{j\in J}=\widehat{\mathcal{A}}[\Delta_{\varpi_j, \varpi_j}^{-1}]_{j\in J}.$$
\end{theorem}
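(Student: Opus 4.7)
The plan is to compare the two algebras via the multi-grading together with the projection $\mathrm{proj}:\CC[G/P_K^-]\to \CC[N_K]$. The inclusion $\widehat{\mathcal{A}}[\Delta_{\varpi_j,\varpi_j}^{-1}]_{j\in J}\subseteq \CC[G/P_K^-][\Delta_{\varpi_j,\varpi_j}^{-1}]_{j\in J}$ is immediate from the construction of $\widehat{\mathcal{A}}$, so I focus on the reverse containment. The key observation is that each $\Delta_{\varpi_j,\varpi_j}$ is homogeneous of weight $\varpi_j$, so the $\Pi_J$-grading on $\CC[G/P_K^-]$ extends naturally to a $\ZZ^J$-grading on the localization, and likewise on $\widehat{\mathcal{A}}[\Delta_{\varpi_j,\varpi_j}^{-1}]_{j\in J}$, whose generators are homogeneous by construction.

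Next I would identify the degree-zero component $R_0 := \bigl(\CC[G/P_K^-][\Delta_{\varpi_j,\varpi_j}^{-1}]_{j\in J}\bigr)_0$ with $\CC[N_K]$ via $\mathrm{proj}$. Surjectivity is immediate from Lemma~\ref{raise}: for $g\in \CC[N_K]$ with minimal lift $\widetilde{g}$ of multi-degree $\lambda(g)=\sum_j a_j(g)\varpi_j$, the element $\widetilde{g}/\prod_{j\in J}\Delta_{\varpi_j,\varpi_j}^{a_j(g)}$ lies in $R_0$ and projects to $g$. Injectivity follows from the fact (noted in the discussion after Lemma~\ref{raise}) that $\mathrm{proj}$ is injective on each fundamental piece $L(\lambda)$: an element $f/\prod_j \Delta_{\varpi_j,\varpi_j}^{a_j}\in R_0$ with $f \in L(\sum_j a_j\varpi_j)$ has projection $\mathrm{proj}(f)$, which vanishes iff $f=0$. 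Since the $\Delta_{\varpi_j,\varpi_j}$ are homogeneous units whose weights $\varpi_j$ form a $\ZZ$-basis of $\ZZ^J$, each graded piece of the localization is a rank-one free $R_0$-module generated by the appropriate $\Delta$-monomial, yielding
$$\CC[G/P_K^-][\Delta_{\varpi_j,\varpi_j}^{-1}]_{j\in J} \;=\; R_0\bigl[\Delta_{\varpi_j,\varpi_j}^{\pm 1}\bigr]_{j\in J}.$$
The same grading argument applies verbatim to the subalgebra $\widehat{\mathcal{A}}[\Delta_{\varpi_j,\varpi_j}^{-1}]_{j\in J}$.

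The theorem thus reduces to proving $\bigl(\widehat{\mathcal{A}}[\Delta_{\varpi_j,\varpi_j}^{-1}]_{j\in J}\bigr)_0 = R_0$, i.e. that the image of this degree-zero component under $\mathrm{proj}$ is all of $\mathcal{A}_J = \CC[N_K]$. For each cluster or frozen variable $x$ of $\mathcal{A}_J$, the lift $\widetilde{x}$ is by Definition~\ref{main definition} a variable of $\widehat{\mathcal{A}}$, so $\widetilde{x}/\prod_{j\in J}\Delta_{\varpi_j,\varpi_j}^{a_j(x)}$ lies in $\bigl(\widehat{\mathcal{A}}[\Delta_{\varpi_j,\varpi_j}^{-1}]_{j\in J}\bigr)_0$ and projects to $x$. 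Since $\mathcal{A}_J$ is generated as a $\CC$-algebra by such cluster and frozen variables, and since $\mathrm{proj}$ restricted to $R_0$ is a ring homomorphism, its image contains a generating set of $\mathcal{A}_J$ and hence equals all of $\mathcal{A}_J$, which completes the reduction.

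The main obstacle I foresee is organizational rather than conceptual: one must carefully verify that the $\ZZ^J$-grading and the rank-one decomposition behave as claimed on both algebras, and that the minimality built into Lemma~\ref{raise} is really what enables the identification $R_0 \cong \CC[N_K]$. Once the grading machinery is in place, the argument is essentially formal, with the cluster-theoretic input reduced to the single observation that $\mathcal{A}_J$ is generated by its cluster and frozen variables.
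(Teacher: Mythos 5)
Your proposal is correct, and it takes a genuinely different route from the paper. The paper first reduces to the case $|J|=1$, then argues by contradiction with an element $f$ of minimal multi-degree outside the localized cluster algebra: it writes $\widetilde{g}-f=h(\Delta_{\varpi_j,\varpi_j}-1)$ with $g=\mathrm{proj}(f)$, pushes $h$ into $\widehat{\mathcal{A}}_{\{j\}}$ by the minimality of $\deg f$, and then separately shows $\widetilde{g}\in\widehat{\mathcal{A}}_{\{j\}}[\Delta_{\varpi_j,\varpi_j}^{-1}]$ by expanding $g$ as a sum of cluster monomials $\sum_i c_i m_i$ and comparing $\sum_i c_i\Delta_{\varpi_j,\varpi_j}^{a_j-a_{i,j}}\widetilde{m_i}$ with the minimal lift. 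Your argument replaces all of this with the observation that both localizations are $\ZZ^J$-graded with every graded piece a free rank-one module over the degree-zero part, so the equality reduces to the degree-zero components, where the ambient one is exactly $\CC[N_K]$ (this is precisely the identification $\CC[N_K]=\{f/\prod_j\Delta_{\varpi_j,\varpi_j}^{a_j}\mid f\in L(\sum_j a_j\varpi_j)\}$ that the paper itself invokes, resting on Lemma \ref{raise} and the injectivity of $\mathrm{proj}$ on each $L(\lambda)$) and the cluster one contains the degree-zero normalizations $\widetilde{x}/\prod_j\Delta_{\varpi_j,\varpi_j}^{a_j(x)}$ of all lifted cluster and frozen variables, hence surjects onto $\CC[N_K]=\mathcal{A}_J$ by Theorem \ref{Schubert cell}. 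Both proofs consume the same two inputs (Goodearl--Yakimov's $\mathcal{A}_J=\CC[N_K]$ and the minimal-lift machinery of Lemma \ref{raise}), but yours is shorter, works uniformly for all $J$ without the singleton reduction, and avoids the delicate final case analysis in the paper; what the paper's version buys in exchange is explicit control of the minimal lift $\widetilde{g}$ relative to $\widehat{\mathcal{A}}$ \emph{before} localizing, which is what feeds the conjecture and remark that follow the theorem. The only points you should spell out in a final write-up are that $\widehat{\mathcal{A}}$ is a graded subalgebra (because all its cluster variables are the homogeneous lifts $\widetilde{x}$, by Theorem \ref{main theorem}) and that $\mathrm{proj}$ extends to the localization because it sends each $\Delta_{\varpi_j,\varpi_j}$ to the unit $1$.
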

\begin{proof}
Throughout the proof, for any element in $\CC[G/P_{I\setminus \{j\}}^-]$ the term {degree} will be used to refer to the homogeneous degree of it.
Recall that $\CC [G/P_K^-]= \bigoplus_{\lambda \in \Pi_J} L(\lambda)$ is generated as a ring by the subspaces $L(\varpi_j) \subset \CC[G/ P_{I \setminus \{j\}}^-].$ Thus, it is generated by $\CC[G/ P_{I \setminus \{j\}}^-]$, where $j \in J$.
It is remarked in \cite{GLS} that if $J' \subset J$, then $\widehat{A_{J'}}$ is a subalgebra of $\widehat{A_{J}}$. Therefore, the result follows if localization of $\CC[G/ P_{I \setminus \{j\}}^-]$ by $\Delta_{\varpi_j,\varpi_j}$ is contained in the localization of $\widehat{\mathcal{A}_{\{j\}}}$, by the same element. We proceed by contradiction. Let $f \in \CC[G/ P_{I \setminus \{j\}}^-]$ such that $f \notin \widehat{\mathcal{A}_{\{j\}}}$ and its degree is minimal. Let $g=\textnormal{proj}(f) \in \CC[N_{I \setminus \{j\}}]$. Then $\textnormal{proj}(\widetilde{g}-f)=0$. Thus, we have that $\widetilde{g}-f$ belongs to the principal ideal $(\Delta_{\varpi_j,\varpi_j}-1)$, since
$$\CC[N_{I \setminus \{j\}}]=\bigslant{\CC[G/P_{I \setminus \{j\}}^{-}]}{(\Delta_{\varpi_j,\varpi_j}-1)}.$$
Consequently, there is some $h \in \CC[G/P_{I \setminus \{j\}}^{-}]$ such that
\begin{align*}
\widetilde{g}-f &=h\left(\Delta_{\varpi_j,\varpi_j}-1\right)\\
\widetilde{g}-f &=h\Delta_{\varpi_j,\varpi_j}-h.
\end{align*}
But note that the definition of $\widetilde{g}$ and the choice of $f$ imply that the degree of the whole left-hand side is less than or equal to degree of $f$. On the other hand, it is obvious that the degree of the right-hand side is the degree of $h$ plus $\varpi_j$. It follows that the degree of $h$ is less than the one of $f$. Therefore, by minimality, we get that $h \in \widehat{\mathcal{A}_{\{j\}}}$. Also, since $\Delta_{\varpi_j,\varpi_j} \in \widehat{\mathcal{A}_{\{j\}}}$, it follows that $h\Delta_{\varpi_j,\varpi_j} \in \widehat{\mathcal{A}_{\{j\}}}.$ Now, if the lifting $\widetilde{g} \in \widehat{\mathcal{A}_{\{j\}}},$ we have
\begin{equation}\label{f}
  f= 
    \underbrace{\widetilde{g}}_\text{$\in \widehat{\mathcal{A}_{\{j\}}}$}
    - 
    \underbrace{h\Delta_{\varpi_j,\varpi_j}}_\text{$\in \widehat{\mathcal{A}_{\{j\}}}$}
    +\underbrace{h}_\text{$\in \widehat{\mathcal{A}_{\{j\}}}$} \in \widehat{\mathcal{A}_{\{j\}}} \subset \widehat{\mathcal{A}_{\{j\}}}[\Delta_{\varpi_j, \varpi_j}^{-1}],
\end{equation}
which is a contradiction to $f$ being outside $\widehat{\mathcal{A}_{\{j\}}}$. Therefore, $\widetilde{g} \notin \widehat{\mathcal{A}_{\{j\}}}$. Now, write $g\in \CC[N_{I\setminus \{j\}}]$ as $g=\sum_{i=1}^r c_im_i,$ where each $m_i$ is a product of cluster variables (might not be from the same seed) and each $c_i$ is a scalar. This forces the $m_i$'s to be distinct.
Recall that $\CC[N_K]$ can be identified with
$$\CC[N_K]=\left\{\dfrac{f}{\prod_{j \in J} \Delta_{\varpi_j,\varpi_j}^{a_j}} \mid f\in L \bigg(\sum_{j\in J} a_j \varpi_j \bigg) \right\}.$$
By the uniqueness and minimality of the tilde map in Lemma \ref{raise}, this can be refined to
$$\CC[N_K]=\left\{\dfrac{f}{\prod_{j \in J} \Delta_{\varpi_j,\varpi_j}^{a_j}} \mid f\in L \bigg(\sum_{j\in J} a_{j} \varpi_j \bigg) \textnormal{ and the } a_j \textnormal{'s are minimal} \right\}.$$
As $J=\{j\}$, we can use the second identification to write each $m_i$ as $\dfrac{f_i}{\Delta_{\varpi_j,\varpi_j}^{a_{i,j}}}$, where $f_i \in L(a_{i,j}\varpi_j)$ and $a_{i,j}$ is minimal with this property. Clearly, the degree $d_i$ of $\widetilde{m_i}$ is $a_{i,j} \varpi_j$. It is not hard to see that $\widetilde{m_i}=f_i$ for all $i=1,...,r$.
%
%
As distinct elements lift by the tilde map to distinct elements by Lemma \ref{raise}, we get that the $\widetilde{m_i}$'s are distinct. Consequently, the $f_i$'s are distinct. Let $a_j:=\max{\{a_{i,j} \mid i=1,...,r\}}$.
Now, if
\begin{equation}\label{min eq}
\sum_{i=1}^r \left( c_i \Delta_{\varpi_j,\varpi_j}^{a_j-a_{i,j}}f_i \right)
\end{equation}
is a lift of $g=\sum_{i=1}^r c_im_i$ in the minimal way, then we get that $\widetilde{g} \in \widehat{\mathcal{A}_{\{j\}}}$, which is a contradiction. Otherwise, there is an element of lower degree in which $\widetilde{g}$ is equal to that element. Note that the multiplication of $\widetilde{g}$ by $\Delta_{\varpi_j,\varpi_j}^{s_j}$ for some positive integer $s_j$ gives an element whose degree is equal to the degree of the element in \ref{min eq} and whose projection is equal to $g$. Since the projection of each homogeneous piece $L(\lambda)$ to $\CC[N_K]$ is injective, we get that
$$\widetilde{g}=\dfrac{\sum_{i=1}^r \left( c_i \Delta_{\varpi_j,\varpi_j}^{a_j-a_{i,j}}f_i \right)}{\Delta_{\varpi_j,\varpi_j}^{s_j}}.$$
Clearly, by (\ref{f}) this implies again that $f \in \widehat{\mathcal{A}_{\{j\}}}[\Delta_{\varpi_j, \varpi_j}^{-1}]$. Consequently, the result follows, that is,
$$\CC[G/P_K^-][\Delta_{\varpi_j, \varpi_j}^{-1}]_{j\in J}=\widehat{\mathcal{A}}[\Delta_{\varpi_j, \varpi_j}^{-1}]_{j\in J}.$$
\end{proof}

\begin{conjecture}
The homogeneous coordinate ring of the flag variety $\CC[G/P_K^-]$ equals the cluster algebra $\widehat{\mathcal{A}}$. In particular, $\CC[G/P_K^-]$ is a cluster algebra whose initial seed is
$$\bigg( \big\{ \widetilde{ D}_{\varpi_{i_k},w_{\leq k} \varpi_{i_k}}  \big\} \sqcup \{ \Delta_{\varpi_j, \varpi_j} \mid j\in J \}, \widehat{B} \bigg).$$
\end{conjecture}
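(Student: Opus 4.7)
The plan is to deduce the equality $\widehat{\mathcal{A}} = \CC[G/P_K^-]$ from Theorem \ref{equality theorem}, which already gives the equality after localization at the frozen minors $\Delta_{\varpi_j,\varpi_j}$, $j\in J$. The containment $\widehat{\mathcal{A}} \subseteq \CC[G/P_K^-]$ is immediate from Definition \ref{main definition}, so the real task is the reverse inclusion, which by the localization result reduces to a \emph{saturation} statement: if $f \in \CC[G/P_K^-]$ and some product $\prod_{j \in J} \Delta_{\varpi_j, \varpi_j}^{N_j} \cdot f \in \widehat{\mathcal{A}}$, then $f \in \widehat{\mathcal{A}}$ already.

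The first strategy I would try is to show that each $\Delta_{\varpi_j, \varpi_j}$ is a prime element of $\widehat{\mathcal{A}}$. In a cluster algebra of geometric type that coincides with its upper cluster algebra, frozen variables are prime in each seed's Laurent polynomial ring, and one can often pass this to the cluster algebra itself. Since Theorem \ref{Schubert cell} constructs $\mathcal{A}$ via Goodearl--Yakimov in a way that coincides with its upper cluster algebra, I would try to transfer this equality to $\widehat{\mathcal{A}}$ using the mutation-compatible lifting of Theorem \ref{main theorem}. Once primality is established, the saturation claim follows by a standard inductive factorization: given $\Delta^N f \in \widehat{\mathcal{A}}$ with $\Delta := \Delta_{\varpi_j,\varpi_j}$, primality lets us divide $\Delta$ out inside $\widehat{\mathcal{A}}$ one step at a time, and by iterating over $j \in J$ we land on $f$ itself.

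An alternative route is more constructive: show directly that every fundamental representation $L(\varpi_j) \subset \CC[G/P_K^-]$ lies in $\widehat{\mathcal{A}}$ for $j\in J$. Since $\CC[G/P_K^-]$ is generated as an algebra by these subspaces, this would finish the proof. Each $L(\varpi_j)$ is spanned by the generalized minors $\Delta_{\varpi_j, w\varpi_j}$, $w \in W$, so it would suffice to realize each such minor as an element of $\widehat{\mathcal{A}}$ (e.g.\ as a cluster monomial, or through an explicit Laurent expression in some cluster obtained by lifting via Lemma \ref{lifting} from the cluster variables $D_{\varpi_{i_k},w_{\leq k}\varpi_{i_k}}$ of Theorem \ref{Schubert cell}). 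The weight-by-weight decomposition of $L(\varpi_j)$ and the explicit mutation combinatorics should in principle allow an inductive argument along a reduced word for $w_0$.

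The main obstacle I anticipate is precisely the normality/primality issue at the frozen minors. All currently available cluster-theoretic tools (the Laurent phenomenon, the starfish lemma, the coincidence of cluster and upper cluster algebras) are compatible with localization and therefore only give the statement of Theorem \ref{equality theorem}; bridging the gap to the unlocalized equality requires ruling out spurious denominators. In the simply-laced cases $A_n$ and $D_4$ treated in \cite{GLS}, categorification arguments provide exactly this extra information, but no analogous tool is yet available in general. A uniform resolution would likely need either (i) a proof that $\widehat{\mathcal{A}}$ equals its own upper cluster algebra, together with a UFD-type argument to promote primality from each Laurent ring to $\widehat{\mathcal{A}}$, or (ii) a direct geometric/representation-theoretic argument that each $L(\varpi_j)$ is already captured by the lifts constructed in Definition \ref{main definition}.
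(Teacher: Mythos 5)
The statement you were asked to prove is stated in the paper as a \emph{conjecture}: the paper does not prove it, and only records (in the remark immediately following) a reformulation, namely that the conjecture is equivalent to showing that if $g=\sum_{i=1}^r c_i m_i$ is an expression in cluster monomials with a minimal number of terms, then the minimal lift is $\widetilde{g}=\sum_{i=1}^r c_i \Delta_{\varpi_j,\varpi_j}^{a_j-a_{i,j}}\widetilde{m_i}$. What the paper actually proves is only the localized statement (Theorem \ref{equality theorem}). Your proposal correctly identifies this landscape --- that the containment $\widehat{\mathcal{A}}\subseteq\CC[G/P_K^-]$ is clear, that the localized equality is available, and that the remaining content is a saturation statement at the frozen minors --- and you are candid that you do not close the gap. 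So your submission is a strategy sketch for an open problem, not a proof, and it should not be presented as resolving the conjecture.

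Beyond that, your first strategy has a concrete circularity worth naming. Primality of $\Delta:=\Delta_{\varpi_j,\varpi_j}$ in $\widehat{\mathcal{A}}$ does not by itself let you ``divide $\Delta$ out one step at a time.'' Given $u=\Delta^N f\in\widehat{\mathcal{A}}$ with $f\in\CC[G/P_K^-]$, to conclude $\Delta^{N-1}f\in\widehat{\mathcal{A}}$ you must first know that $u$ lies in the ideal $\Delta\widehat{\mathcal{A}}$, i.e.\ that $\Delta\,\CC[G/P_K^-]\cap\widehat{\mathcal{A}}=\Delta\widehat{\mathcal{A}}$ --- but that identity is precisely the saturation you are trying to establish. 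Making this work requires additional input (e.g.\ that $\widehat{\mathcal{A}}$ is noetherian and normal so that one can intersect localizations, or that $\widehat{\mathcal{A}}/(\Delta)$ injects into $\CC[G/P_K^-]/(\Delta)$), none of which is known here; cluster algebras need not even be noetherian or normal in general. Your second strategy (showing each $L(\varpi_j)\subseteq\widehat{\mathcal{A}}$ directly) is closer in spirit to what a genuine proof would need, and is essentially a geometric reformulation of the paper's own reduction, but as you present it it remains a statement of intent rather than an argument: realizing every $\Delta_{\varpi_j,w\varpi_j}$ inside $\widehat{\mathcal{A}}$ without inverting frozen variables is exactly the hard point, and no inductive mechanism is supplied.
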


\begin{remark}
Using the proof of the previous theorem, the conjecture is equivalent to proving that if $g=\sum_{i=1}^r c_i m_i$ is written where the number of terms is minimal, then
$$\widetilde{g}=\sum_{i=1}^r \left( c_i \Delta_{\varpi_j,\varpi_j}^{a_j-a_{i,j}}\widetilde{m_i} \right).$$
\end{remark}

\begin{example}
Let $G$ be a semisimple algebraic group of type $B_3$, say $G={SO}_{2(3)+1}={SO}_7$, $J=\{3\}$ and $K=I \setminus J=\{1,2\}.$ Consider the longest word
\[w_0={s_1s_2s_1} \boldsymbol{s_3s_2s_1s_3s_2s_3}.\]
The subword $w=s_3s_2s_1s_3s_2s_3$ generates $N_K$. Since $s(3)=s(5)=s(6)=\infty$ and $s(k) \neq \infty$ for $k\in \{1,2,4 \}$, we get that the mutable variables are indexed by $1,2,4$ and the frozen ones are indexed by $3,5,6$ using the function $s$ of Theorem \ref{Schubert cell}. Therefore, by the same theorem, the exchange matrix of the cluster algebra structure of $\CC[N_K]$ is
\[
\begin{blockarray}{cccc}
1 & 2 & 4 \\
\begin{block}{(ccc)c}
  0 & a_{i_1 i_2} & 1 & 1 \\
  -a_{i_2 i_1} & 0 & a_{i_2 i_4} & 2 \\
  -1 & -a_{i_4 i_2} & 0 & 4 \\
  -a_{i_3 i_1} & -a_{i_3 i_2} & 0 & 3 \\
  0 & -1 & -a_{i_5 i_4} & 5 \\
  0 & 0 & -1 & 6\\
\end{block}
\end{blockarray}
 \]
 \[=
\begin{blockarray}{cccc}
1 & 2 & 4 \\
\begin{block}{(ccc)c}
  0 & -2 & 1 & 1 \\
  1 & 0 & -1 & 2 \\
  -1 & 2 & 0 & 4 \\
  0 & 1 & 0 & 3 \\
  0 & -1 & 1 & 5 \\
  0 & 0 & -1 & 6\\
\end{block}
\end{blockarray}
 \]
where the column labels denote the cluster variables and the row labels denote the extended cluster variables, as usual. Also, the extended cluster variables $D_{\varpi_{i_j},w_{\leq j} \varpi_{i_j}}$ are
\begin{align*}
j=1 &\implies D_{\varpi_{3},s_3 \varpi_{3}}; & \textnormal{(mutable)}\\
j=2 &\implies D_{\varpi_{2},s_3s_2 \varpi_{2}}; & \textnormal{(mutable)}\\
j=3 &\implies D_{\varpi_{1},s_3s_2s_1 \varpi_{1}}; & \textnormal{(frozen)}\\
j=4 &\implies D_{\varpi_{3},s_3s_2s_1s_3 \varpi_{3}}; & \textnormal{(mutable)}\\
j=5 &\implies D_{\varpi_{2},s_3s_2s_1s_3s_2 \varpi_{2}}; & \textnormal{(frozen)}\\
j=6 &\implies D_{\varpi_{3},s_3s_2s_1s_3s_2s_3 \varpi_{3}}.& \textnormal{(frozen)}
\end{align*}
Therefore, by Theorem $\ref{main theorem}$, the following list of variables form an initial extended cluster of $\widehat{\mathcal{A}} \subset \CC[G/P_K^-]$
\begin{align*}
&\widetilde{D}_{\varpi_{3},s_3 \varpi_{3}}; &\textnormal{(mutable)}\\
&\widetilde{D}_{\varpi_{2},s_3s_2 \varpi_{2}}; &\textnormal{(mutable)}\\
&\widetilde{D}_{\varpi_{3},s_3s_2s_1s_3 \varpi_{3}}; &\textnormal{(mutable)}\\
&\widetilde{D}_{\varpi_{1},s_3s_2s_1 \varpi_{1}}; &\textnormal{(frozen)}\\ 
&\widetilde{D}_{\varpi_{2},s_3s_2s_1s_3s_2 \varpi_{2}}; &\textnormal{(frozen)}\\
&\widetilde{D}_{\varpi_{3},s_3s_2s_1s_3s_2s_3 \varpi_{3}}; &\textnormal{(frozen)}\\
&\Delta_{\varpi_3,\varpi_3}. &\textnormal{(frozen)}
\end{align*}
Consequently, the extended exchange matrix $\widehat{B}$ of $\widehat{\mathcal{A}}$ attached to this extended cluster is
  \[
\widehat{B}=\begin{blockarray}{cccc}
 1 & 2 & 4 \\
\begin{block}{(ccc)c}
  0 & -2 & 1 & 1 \\
  1 & 0 & -1 & 2 \\
  -1 & 2 & 0 & 4 \\
  0 & 1 & 0 & 3 \\
  0 & -1 & 1 & 5 \\
  0 & 0 & -1 & 6\\
  \cmidrule(lr){1-3}
  -1 & 0 & 0 & j \in J\\
\end{block}
\end{blockarray}
.\]
\end{example}

\section*{Acknowledgements}
The author thanks Jimmy Dillies, Bernard Leclerc, Bach Nguyen and Milen Yakimov for useful discussions in preparing this paper. We are grateful to the anonymous referee for pointing out an error in the first version of the paper.

\end{document}